\documentclass[11pt]{article}

\usepackage{subfigure}
\usepackage{color}
\usepackage[cmex10]{amsmath}
\usepackage{mathrsfs,amssymb,amsmath}
\usepackage{cases}
\usepackage{graphicx}
\usepackage{caption}

\usepackage{epstopdf}
\usepackage{hyperref}

\usepackage{float}
\usepackage{placeins}

\newcommand{\R}{{\mathbb R}}

\def\dref#1{(\ref{#1})}

\usepackage{algorithm} 
\usepackage{algorithmic} 

\newtheorem{notation}{Notation}[section]
\newtheorem{corollary}{Corollary}[section]
\newtheorem{remark}{Remark}[section]
\newtheorem{lemma}{Lemma}[section]
\newtheorem{assumption}{Assumption}[section]
\newtheorem{proposition}{Proposition}

\newtheorem{theorem}{Theorem}[section]
\newtheorem{definition}{Definition}[section]

\newenvironment{proof}{{\bf {Proof.}}}{\hfill $\square$}
\allowdisplaybreaks[4]

\numberwithin{equation}{section}

\def\dref#1{(\ref{#1})}

\def\pt{\partial}

\def\ra{\rightarrow}

\def\s{\subseteq}

\def\e{\varepsilon}

\def\vp{\varphi}
\def\lg{\langle}

\def\rg{\rangle}

\def\mf{\mathfrak}
\def\bf{\textbf}
\def\pt{\partial}

\def\om{\omega}
\def\Om{\Omega}
\def\la{\lambda}
\def\al{\alpha}
\def\be{\beta}
\def\de{\delta}
\def\ga{\gamma}

\def\La{\Lambda}

\def\ts{\times}

\def\iy{\infty}

\def\f{\frac}

\def\Lra{\Leftrightarrow}

\def\ura{\rightharpoonup}

\def\df{\mathrm d}

\def\wt{\widetilde}
\def\wh{\widehat}

\def\esssup{\operatorname*{ess\ \! sup}}

\def\hra{\hookrightarrow}

\def\diag{{\rm diag}}

\def\mcA{\mathcal{A}}

	\def\mbT{\mathbb{T}}

	\DeclareMathOperator{\Div}{div}

	\newcommand{\N}{\mathbb N}

\begin{document}

\title{{\bf   {\bf   Null Controllability for Degenerate Parabolic Equations with Internal Control Applied on a Measurable Subset}}\footnote{\small This work was carried out with the support of the
National Natural Science Foundation of China under grant  nos. 12131008 and U23B2033, and
National Key R\&D Program of China under grant no. 2024YFA1013101.}}

\author{ Dong-Hui Yang$^{a}$, Mengze Gu$^{b}$ \footnote{\small
		The corresponding author: gu3059268737@gmail.com}, Bao-Zhu Guo$^{c}$,
		Ghadir Shokor$^{d}$
		\\
		$^a${\it School of Mathematics and Statistics, Central South University}\\
		{\it Changsha 410075, P.R.China}\\
		$^b${\it School of Mathematics and Science, Changsha Normal University}\\ {\it  Changsha 410083, China}\\
		$^c${\it Academy of Mathematics and Systems Science, Academia Sinica}\\
		{\it Beijing 100190, China}\\
		$^d${\it School of Mathematics and Statistics, Central South University}\\
			{\it Changsha 410075, P.R.China} }

\date{}

\maketitle{}

\begin{abstract}

  This work serves as a continuation of our preceding paper \cite{Yang2}. In that study, we presented a separable variable method to derive the Lebeau-Robbiano spectral inequality for a specific degenerate parabolic equation and subsequently employed it to demonstrate the null controllability of said equation when internal control is applied to an open subset. In the current paper, we reapply the separable variable method to attain the Lebeau-Robbiano spectral inequality for a different degenerate parabolic equation, and we substantiate the null controllability of this equation with internal control acting on a measurable subset. This approach may offer an alternative means of proving controllability results for degenerate parabolic equations.

\vspace{0.3cm}

\noindent {\bf {Keywords:}}  Degenerate partial differential equations, shape design approximation, Carleman estimate, null controllability.
	
\vspace{0.3cm}
		
		\noindent {\bf {AMS subject classifications (2010):}}~ 35J70, 35K65, 49Q10, 93B05.

	\end{abstract}

\section{Introduction}

	 In this paper, we explore the null controllability of the following degenerate equation:
\begin{equation}\label{12.14.1}
	\begin{cases}
		\partial_{t}\varphi - \Div(A \nabla \varphi) = \chi_D f, & \text{in } Q, \\
		\varphi = 0, & \text{on } \partial Q, \\
		\varphi(0) = \varphi^0, & \text{in } \Omega,
	\end{cases}
\end{equation}
where $w, \Omega, Q, D$, and $\varphi^0, f$ are defined in the subsequent Assumption \ref{Assumption (H)}.

\begin{assumption}[Assumptions and notations]\label{Assumption (H)}
Let $\mathbb{N} = \{0, 1, 2, \cdots\}$, $\mathbb{N}^* = \mathbb{N} \setminus \{0\}$. Define $\Omega = \mathbb{T} \times \Lambda$, where $\mathbb{T} = \{x \in \mathbb{R}^2 : |x| = 1\}$ is the torus and $\Lambda = (0, 1)$. For a given $T > 0$, set $Q = \Omega \times (0, T)$, $\Lambda_T = \Lambda \times (0, T)$, and $\partial Q = \partial \Omega \times (0, T)$. Let $D \subset Q$ be a measurable subset with positive measure. We denote the spatial variable by $z = (\theta, r) \in \Omega$ and the space-time variable by $(z, t) \in Q$.

Additionally, fix $\alpha \in (0, 1)$ as a given constant, set $w = r^\alpha$, and define $A = \diag(1, w) = \diag(1, r^\alpha)$ as the diagonal matrix. For the separation of variables discussed in the paper, we introduce the operators $\mathfrak{A}_1 u = -\partial_{\theta\theta} u$ and $\mathfrak{A}_2 u = -\partial_r(w \partial_r u) = -\partial_r(r^\alpha \partial_r u)$. The initial condition $\varphi^0 \in L^2(\Omega)$ and the control term $f \in L^2(Q)$. The solution of \eqref{12.14.1} is denoted by $\varphi(t) = \varphi(\cdot, t) = \varphi(\cdot, \cdot, t)$ when emphasizing the time dependence $t$, and by $\varphi(z, t; f)$ when emphasizing the dependence on the control $f$.
\end{assumption}

It is evident that the degenerate parabolic equation \eqref{12.14.1} is degenerate on the boundary $\mathbb{T} \times \{0\}$.

The controllability of partial differential equations (PDEs) is a well-established field with a rich history dating back to the 1950s. Numerous studies have been conducted on this topic, including works by \cite{Apraiz, Coron, FG, Fursikov, Lasiecka, Lions, Rousseau, Tenenbaum, Weiss, Zuazua}. More recently, research on the controllability of degenerate PDEs began in the early 2000s. To date, most studies have focused on one-dimensional equations \cite{Beauchard, Buffe, Cannarsa, Guo, Gueye, Liu}, with several addressing the two-dimensional case \cite{Araruna, Beauchard, Cannarsa1, Guo}. However, higher-dimensional degenerate equations remain largely unexplored \cite{Wu, Wu1, Yang1}.

To this end, controllability on measurable subsets (i.e., $L^\infty(Q)$ control) has been developed in \cite{Apraiz, Liu, Wang}. In our work, we rely on the Lebeau-Robbiano spectral inequality \cite{Apraiz, Buffe, Gueye, Liu, Rousseau, Wang, Yang2}, which applies to elliptic, parabolic, and hyperbolic equations. This inequality is derived from the Carleman estimate using pseudodifferential methods. Notably, Carleman estimates have a wide range of applications \cite{Araruna, Beauchard, Cannarsa1, FG, Fursikov, Guo, Rousseau, Wu, Yang2, Yang1, Yang3}, including unique continuation \cite{Cannarsa, Wu1, Yang1}, controllability, and inverse problems \cite{Bellassoued, Zuazua}.

In this context, the method of separation of variables is a fundamental technique for solving PDEs. For degenerate parabolic equations, it has found some applications \cite{Beauchard, Tenenbaum, Yang2}, although its use is typically restricted to special domains, such as rectangular domains \cite{Araruna} or the torus \cite{Tenenbaum}. The domain considered in this paper is also a special one. We note that, in general, the domain is neither rectangular nor a torus. In such cases, the method of separation of variables is no longer applicable \cite{Yang3}.

In this paper, we apply the method of separation of variables to derive the Lebeau-Robbiano inequality and then use analytic techniques to establish controllability on measurable subsets.

Our main results are stated in Theorems \ref{01.18.T1} and \ref{01.19.T1} below.
\begin{theorem}\label{01.18.T1}
Under Assumption \ref{Assumption (H)}, let $\alpha \in (0, 1)$ and $0 < a < b < 1$. Let $D \subset \Lambda \times (a, b) \times (0, T)$ be a measurable subset with positive measure. Then, there exists a constant $C \geq 1$, depending only on $\alpha, a, b, T$, and $D$, such that the solution of \eqref{12.14.1} with $f = 0$ satisfies
\begin{equation}\label{01.18.18}
	\begin{split}
		\left(\int_\Omega \varphi^2(T) \, dz\right)^{\frac{1}{2}} \leq C \iint_D |\varphi(z, t)| \, dz \, dt \quad \text{for all } \varphi^0 \in L^2(\Omega).
	\end{split}
\end{equation}
\end{theorem}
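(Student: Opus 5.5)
The plan is the standard route for observability from measurable sets (Apraiz--Escauriaza--Wang--Zhang, Phung--Wang): a Lebeau--Robbiano spectral inequality that holds on measurable sets, combined with a telescoping argument over time.

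\textbf{Step 1: spectral decomposition by separation of variables.} Let $\{e_k(\theta)\}$ be the eigenfunctions of $\mathfrak{A}_1$ on $\mathbb{T}$, with eigenvalues $\mu_k=k^2$. Let $\{\psi_j(r)\}$ be the Dirichlet eigenfunctions of $\mathfrak{A}_2$ on $\Lambda$, with eigenvalues $\nu_j$. For $\alpha\in(0,1)$ (weakly degenerate case) these are Bessel-type functions $r^{(1-\alpha)/2}J_{\pm\beta}(c\sqrt{\nu_j}\,r^{(2-\alpha)/2})$. The products $e_k\psi_j$ then form an orthonormal eigenbasis of $-\Div(A\nabla\cdot)$, with eigenvalues $\lambda=\mu_k+\nu_j$.

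For $\Lambda_0>0$ put $E_{\Lambda_0}=\mathrm{span}\{e_k\psi_j:\mu_k+\nu_j\le\Lambda_0\}$.

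\textbf{Step 2: spectral inequality on a measurable set.} The target is
\[
\|u\|_{L^2(\Omega)}\le Ce^{C\sqrt{\Lambda_0}}\|u\|_{L^1(\omega)}\qquad\text{for all } u\in E_{\Lambda_0},
\]
for every measurable $\omega\subset\mathbb{T}\times(a,b)$ of positive measure, with $C$ depending only on $|\omega|$, $a$, $b$ and $\alpha$.

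The approach goes through real analyticity. On $\mathbb{T}\times(a/2,(1+b)/2)$ the operator is uniformly elliptic with analytic coefficients, since $r^\alpha$ is analytic for $r>0$. Take $u\in E_{\Lambda_0}$ and lift it to $U(z,s)=\sum a_{kj}\cosh(\sqrt{\lambda_{kj}}\,s)e_k\psi_j$. This $U$ solves the elliptic equation $\partial_{ss}U+\Div(A\nabla U)=0$. Interior analytic estimates then give
\[
|\partial^\beta u(z)|\le \frac{\beta!}{\rho^{|\beta|}}\,e^{C\sqrt{\Lambda_0}}\|u\|_{L^2(\Omega)}
\]
on a neighborhood of $\mathbb{T}\times[a,b]$.

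Two further ingredients are needed:
\begin{itemize}
\item a lower bound $\sup_{\mathbb{T}\times(a,b)}|u|\ge e^{-C\sqrt{\Lambda_0}}\|u\|_{L^2(\Omega)}$, which comes from the interior Lebeau--Robbiano spectral inequality on the open set $\mathbb{T}\times(a,b)$, as in \cite{Yang2};
\item Vessella's propagation-of-smallness estimate for analytic functions from a measurable set, which then yields the $L^1(\omega)$ bound.
\end{itemize}

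\textbf{Step 3: telescoping in time.} Take a Lebesgue density point $t_0$ of the time projection of $D$, and decreasing times $t_{m}\to t_0$ with $t_m-t_{m+1}$ shrinking geometrically. On each interval $(t_{m+1},t_m)$, Fubini gives a measurable slice $\omega_t$ of measure at least $\gamma>0$ for a set of times of measure at least $(t_m-t_{m+1})/2$. Apply Step 2 to the low-frequency part $P_{\Lambda_0}\varphi(t)$ and use the dissipation of the high-frequency part, $\|(I-P_{\Lambda_0})\varphi(t)\|\le e^{-\Lambda_0 (t-s)}\|\varphi(s)\|$. Integrating over the good times gives an interpolation inequality of the form
\[
\|\varphi(t_m)\|\le \Big(Ce^{C/(t_m-t_{m+1})}\!\int_{t_{m+1}}^{t_m}\!\!\|\varphi\|_{L^1(\omega_t)}dt\Big)^{\theta}\|\varphi(t_{m+1})\|^{1-\theta}.
\]
Young's inequality converts this into
\[
\epsilon^{1-\theta}\text{-weighted telescoping bounds } \ e^{-C/\ell_m}\|\varphi(t_m)\|-e^{-C/\ell_{m+1}}\|\varphi(t_{m+1})\|\le C\int_{D\cap(t_{m+1},t_m)}|\varphi|,
\]
where $\ell_m=t_m-t_{m+1}$. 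Summing over $m$ and using $\|\varphi(T)\|\le\|\varphi(t_{1})\|$, which holds by contraction of the semigroup, gives \eqref{01.18.18}.

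\textbf{Main obstacle.} The hard step is Step 2: getting analyticity bounds whose constant grows only like $e^{C\sqrt{\Lambda_0}}$ uniformly over the eigenspace, despite the degeneracy at $r=0$. I would handle it through the elliptic lifting plus an interior Cauchy estimate, which uses only the region $r\ge a/2$. This keeps the degeneracy out of the local analysis, so it only enters through the global $L^2$ bound on $U$.
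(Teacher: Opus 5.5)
Your plan is sound, and it takes a genuinely different route from the paper in the middle step. The paper never proves a stationary spectral inequality on $E_{\Lambda_0}$. Section 4 gives only the parabolic observability \eqref{01.18.1} from the strip $\mathbb{T}\times(a,b)$, obtained from one-dimensional Carleman estimates for each $\theta$-mode that are uniform in $n$. (Theorem \ref{01.17.T2} is presented as a spectral inequality, but it is also an estimate for the evolution, on data with finitely many $\theta$-modes.) Section 5 is built around that input:
\begin{itemize}
\item the lift $\sum_n\varphi_n^0e^{-\lambda_nt+\sqrt{\lambda_n}\tau}\Phi_n$ gives analyticity in both $z$ and $t$ (Proposition \ref{01.18.P1});
\item propagation of smallness is applied in $t$ (Lemma \ref{01.18.L1}) and in $z$ on each slice $D_t$ (Lemma \ref{01.18.L2});
\item the strip observability enters through $\|\varphi\|_{L^2}\le\|\varphi\|_{L^\infty}^{1/2}\|\varphi\|_{L^1}^{1/2}$, which yields \eqref{01.18.11}, followed by the same telescoping you describe.
\end{itemize}
You replace ``strip observability plus time analyticity'' by ``measurable-set spectral inequality plus frequency splitting''. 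This buys you three things: no time analyticity, a single propagation-of-smallness step, and an interpolation cost $e^{C/\ell}$ instead of the $e^{C/\ell^8}$ that the paper inherits from $\Theta=[t(T-t)]^{-4}$. Your passage from an open-set to an $L^1(\omega)$ spectral inequality ($\cosh$ lift, lower bound, Vessella) is correct, and so is Step 3.

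The price is that your Step 2 rests on an input that neither the paper nor your proposal establishes. That input is the open-strip inequality $\|u\|_{L^2(\Omega)}\le Ce^{C\sqrt{\Lambda_0}}\|u\|_{L^2(\mathbb{T}\times(a,b))}$ on $E_{\Lambda_0}$. It is a stationary estimate that implies, but is not directly given by, \eqref{01.18.1}, and \cite{Yang2} treats a different operator. This is the only place where the degeneracy at $r=0$ enters, so it, and not the analytic bound, is your real obstacle. The analytic bound is immediate from $\cosh(\sqrt{\lambda}s)\le e^{\sqrt{\Lambda_0}}$ for $|s|\le 1$ together with interior estimates away from $r=0$.

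The input does reduce cleanly to one dimension. Because the observation set contains the whole circle, orthogonality of $\{e_k\}$ gives $\|u\|_{L^2(\Omega)}^2=\sum_k\|u_k\|_{L^2(0,1)}^2$ and $\|u\|_{L^2(\mathbb{T}\times(a,b))}^2=\sum_k\|u_k\|_{L^2(a,b)}^2$, where $u_k=\sum_{\nu_j\le\Lambda_0-k^2}a_{kj}\psi_j$. So it suffices to have $\|v\|_{L^2(0,1)}\le Ce^{C\sqrt{\kappa}}\|v\|_{L^2(a,b)}$ for all $v\in\mathrm{span}\{\psi_j:\nu_j\le\kappa\}$. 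That one-dimensional inequality still has to be proved or quoted from the literature on weakly degenerate operators. One way to prove it is an elliptic Carleman estimate for $\partial_s^2-\mathfrak{A}_2$ with a weight behaving like the paper's $\eta=r^{2-\alpha}$ near $r=0$.

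There is also a small slip in Step 3. The density point must be taken for $E=\{t:|D_t|\ge |D|/(2T)\}$, not for the time projection of $D$; otherwise the Fubini claim fails.
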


 Here and in the subsequent discussion, we employ the symbol
$C$ to represent constants, noting that these constants may assume different values across distinct contexts.

\begin{theorem}\label{01.19.T1}
Under the conditions in Theorem \ref{01.18.T1}, for any $\varphi^0 \in L^2(\Omega)$, there exists a control function $f \in L^\infty(Q)$ with
\begin{equation}\label{01.19.5}
	\|f\|_{L^\infty(\Omega \times (0, T))} \leq C \|\varphi^0\|_{L^2(\Omega)},
\end{equation}
such that the solution $\varphi$ of \eqref{12.14.1} satisfies
\begin{equation}\label{01.19.6}
	\varphi(z, T; f) = 0, \quad \text{a.e. } z \in \Omega.
\end{equation}
\end{theorem}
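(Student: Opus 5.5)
Theorem \ref{01.19.T1} follows from the observability inequality of Theorem \ref{01.18.T1} by the standard $L^1$--$L^\infty$ duality argument. Write $\mathcal{A}\varphi=-\Div(A\nabla\varphi)$, which is self-adjoint and nonnegative on $L^2(\Omega)$ with Dirichlet conditions, so that \eqref{12.14.1} generates a contraction semigroup $S(t)=e^{-t\mathcal{A}}$. For $f\in L^2(Q)$ (in particular $f\in L^\infty(Q)$, since $Q$ is bounded), the solution is
\[
\varphi(T;f)=S(T)\varphi^0+\int_0^T S(T-s)\chi_D f(s)\,ds .
\]
The adjoint system is backward: $-\partial_t\psi+\mathcal{A}\psi=0$ with $\psi(T)=\psi^T$. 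Reversing time gives the same forward equation with $f=0$, and the set $D$ is reflected to $\tilde D=\{(z,T-t):(z,t)\in D\}$, which is again a measurable subset of $\Lambda\times(a,b)\times(0,T)$ (reading the set as in the statement of Theorem \ref{01.18.T1}) with positive measure. Theorem \ref{01.18.T1}, applied with $\tilde D$, then gives
\[
\|\psi(0)\|_{L^2(\Omega)}\le C\iint_D|\psi|\,dz\,dt .
\]

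Next, define the subspace $X=\{\chi_D\psi:\psi^T\in L^2(\Omega)\}\subset L^1(Q)$ and the linear functional
\[
\ell(\chi_D\psi)=-\int_\Omega \varphi^0\,\psi(0)\,dz .
\]
The observability inequality above shows that $\ell$ is well defined: if $\chi_D\psi=0$ then $\psi(0)=0$. It also gives the bound $|\ell(\chi_D\psi)|\le C\|\varphi^0\|_{L^2}\|\chi_D\psi\|_{L^1(Q)}$. By Hahn--Banach, $\ell$ extends to all of $L^1(Q)$ with the same norm. Since $(L^1(Q))^*=L^\infty(Q)$, there is $f\in L^\infty(Q)$ with $\|f\|_{L^\infty}\le C\|\varphi^0\|_{L^2}$, which is \eqref{01.19.5}, and
\[
\iint_Q f\,\chi_D\psi\,dz\,dt=-\int_\Omega\varphi^0\psi(0)\,dz \quad\text{for all }\psi^T .
\]

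To conclude, multiply \eqref{12.14.1} by $\psi$ and integrate over $Q$. The duality identity
\[
\int_\Omega\varphi(T;f)\psi^T\,dz-\int_\Omega\varphi^0\psi(0)\,dz=\iint_Q\chi_D f\psi\,dz\,dt
\]
holds by the semigroup formula and the self-adjointness of $S(t)$. Combined with the previous identity, it yields $\int_\Omega\varphi(T;f)\psi^T\,dz=0$ for every $\psi^T\in L^2(\Omega)$, hence \eqref{01.19.6}.

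The main point needing care is justifying this duality identity in the degenerate setting. For that I would work in the weighted energy space associated with $\mathcal{A}$: first take $\varphi^0,\psi^T\in D(\mathcal{A})$ and smooth $f$, then pass to the limit by density. The semigroup representation reduces the identity to $\langle S(T-s)g,\psi^T\rangle=\langle g,S(T-s)\psi^T\rangle$, which holds because $S$ is self-adjoint. No further degenerate estimates are needed beyond Theorem \ref{01.18.T1}.
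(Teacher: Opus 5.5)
Your proposal is correct and follows essentially the same route as the paper. The functional $\chi_D\psi\mapsto-\int_\Omega\varphi^0\psi(0)\,dz$ on solutions of the backward adjoint system is bounded in $L^1$ via Theorem \ref{01.18.T1}, extended by Hahn--Banach, and represented by some $f\in L^\infty$; the duality identity then forces $\varphi(T;f)=0$. Your explicit time reversal (applying Theorem \ref{01.18.T1} to the reflected set $\tilde D$) and your well-definedness check for the functional are both left implicit in the paper, and extending to $L^1(Q)$ instead of to $L^1(D)$ with zero extension is only a cosmetic difference.
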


We proceed  as follows. In Section \ref{S2}, we introduce the solution spaces for equation \eqref{12.14.1} and discuss the
 Hilbert uniqueness method. In Section \ref{S3}, we apply the method of separation of variables to decompose equation \eqref{12.14.1} into a sequence of degenerate parabolic equations \eqref{01.16.1}. In Section \ref{S4}, we derive the Carleman estimate for the sequence \eqref{01.16.1} and obtain the corresponding spectral inequalities. Finally, in Section \ref{S5}, we prove our main results, Theorems \ref{01.18.T1} and \ref{01.19.T1}.
	
\section{Preliminary results}\label{S2}

 In this section, we introduce the weighted Sobolev spaces associated with equation \eqref{12.14.1}, define the concept of a weak solution for this equation, and discuss the J.-L. Lions Hilbert Uniqueness Method.

\subsection{Solution spaces}
	
Define
\begin{equation*}
	H^1(\Om;w)=\left\{u\in L^2(\Om)\colon \int_\Om \nabla u\cdot A\nabla u\df x<+\iy\right\}.
\end{equation*}
Its inner product and norm are given by
\begin{equation*}
	(u,v)_{H^1(\Om;w)}=\int_\Om (\nabla u\cdot A\nabla v+uv)\df x, \quad
	\|u\|_{H^1(\Om;w)}=(u,u)_{H^1(\Om;w)}^\f{1}{2}
\end{equation*}
for all $u,v\in H^1(\Om;w)$.
Set
\begin{equation*}
	H_0^1(\Om;w)=\mbox{the closure of the set $C_0^\iy(\Om)$ in }H^1(\Om;w).
\end{equation*}
and
\begin{equation*}
	H^{-1}(\Om;w)=\mbox{the dual space of }H_0^1(\Om;w).
\end{equation*}

Define
\begin{equation*}
	H^2(\Om;w)=\left\{u\in H^1(\Om;w)\colon \mcA u\in L^2(\Om)\right\}.
\end{equation*}
Its inner product and norm are defined by
\begin{equation*}
	(u,v)_{H^2(\Om;w)}=\int_\Om (\mcA u)(\mcA v)\df x+(u,v)_{H^1(\Om;w)},\quad \|u\|_{H^2(\Om;w)}=(u,u)_{H^2(\Om;w)}^\f{1}{2}
\end{equation*}
for all $u,v\in H^2(\Om;w)$. Set
\begin{equation*}
	D(\mcA)=H_0^1(\Om;w)\cap H^2(\Om;w).
\end{equation*}

It is well-known that
\begin{equation*}
	(H_0^1(\Om;w),(\cdot,\cdot)_{H_0^1(\Om;w)}), \mbox{ and } (H^1(\Om;w), (\cdot,\cdot)_{H^1(\Om;w)}), \mbox{ and } (H^2(\Om;w),(\cdot,\cdot)_{H^2(\Om;w)})
\end{equation*}
are Hilbert spaces (see, e.g., \cite{GC,Heinonen}.)

 The following lemma presents a Hardy inequality.

\begin{lemma}\label{01.04.L1}
	Let $\al\in (0,1)$. Then there exists a constant $C>0$, depending only on $\al$, such that
	\begin{equation*}
		\int_\Om r^{\al-2}u^2\df z\leq C\int_\Om r^\al (\pt_ru)^2\df z
	\end{equation*}
	for all $u\in H_0^1(\Om;w)$.
\end{lemma}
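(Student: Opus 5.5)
The plan is to reduce the inequality to a one-dimensional Hardy inequality in $r$ for each fixed $\theta$, and then integrate over $\mathbb{T}$. By the definition of $H_0^1(\Omega;w)$ as the closure of $C_0^\infty(\Omega)$, it suffices to prove the estimate for $u\in C_0^\infty(\Omega)$. For general $u\in H_0^1(\Omega;w)$ we take $u_n\in C_0^\infty(\Omega)$ with $u_n\to u$ in $H^1(\Omega;w)$. Along a subsequence $u_n\to u$ a.e., and the right-hand sides $\int_\Omega r^\alpha(\partial_r u_n)^2\,dz$ converge to $\int_\Omega r^\alpha(\partial_r u)^2\,dz$. Fatou's lemma applied to the left-hand side then passes the inequality to the limit.

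\textbf{One-dimensional estimate.} Fix $\theta$ and write $v(r)=u(\theta,r)$. This $v$ is smooth with compact support in $(0,1)$, so in particular it vanishes near $r=0$. The claim is
\begin{equation*}
\int_0^1 r^{\alpha-2}v^2\,dr\le \frac{4}{(1-\alpha)^2}\int_0^1 r^\alpha (v')^2\,dr .
\end{equation*}
Since $\alpha-1\neq 0$, we can write $r^{\alpha-2}=\frac{1}{\alpha-1}\,(r^{\alpha-1})'$ and integrate by parts. The boundary terms vanish by compact support, giving
\begin{equation*}
\int_0^1 r^{\alpha-2}v^2\,dr=\frac{2}{1-\alpha}\int_0^1 r^{\alpha-1}v v'\,dr .
\end{equation*}
Next split the integrand as $r^{\alpha-1}vv'=(r^{\frac{\alpha-2}{2}}v)(r^{\frac{\alpha}{2}}v')$ and apply Cauchy--Schwarz:
\begin{equation*}
\int_0^1 r^{\alpha-2}v^2\,dr\le\frac{2}{1-\alpha}\Big(\int_0^1 r^{\alpha-2}v^2\,dr\Big)^{1/2}\Big(\int_0^1 r^{\alpha}(v')^2\,dr\Big)^{1/2}.
\end{equation*}
The left side is finite because $v$ vanishes near $0$, so we may divide by its square root and square. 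This yields the claim with $C=4/(1-\alpha)^2$, which depends only on $\alpha$.

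\textbf{Integration over $\mathbb{T}$.} Integrating the one-dimensional inequality in $\theta$ over $\mathbb{T}$ (Fubini, with $dz=d\theta\,dr$) gives the lemma for smooth compactly supported $u$. The density argument above then extends it to all of $H_0^1(\Omega;w)$.

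\textbf{Main obstacle.} The delicate point is the approximation step. We must ensure that smooth compactly supported functions really approximate in the weighted norm, which holds by definition here. We also need the limit passage to be justified without knowing a priori that the left-hand side is finite; Fatou's lemma handles this. Note that the restriction $\alpha<1$ is used only to guarantee $1-\alpha\neq0$, so that the constant is finite.
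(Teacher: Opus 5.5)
Your proof is correct, and it gives the same constant $4/(1-\alpha)^2$ as the paper. The difference is in how the one-dimensional Hardy estimate is obtained.

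The paper does not integrate by parts. It writes $u(\theta,r)=\int_0^r\partial_r u(\theta,s)\,ds$ (using $u(\theta,0)=0$) and applies Cauchy--Schwarz with the auxiliary splitting $s^{\beta}\cdot s^{-\beta}$ for some $\beta\in(\alpha,1)$. It then exchanges the order of integration in $(r,s)$, which yields the constant $\frac{1}{(1-\beta)(\beta-\alpha)}$, minimized at $\beta=\frac{1+\alpha}{2}$.

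Your route is integration by parts against $(r^{\alpha-1})'$ followed by absorption. It is shorter and avoids the auxiliary exponent. However, absorption requires the left-hand side to be finite a priori. That is why you must first work on $C_0^\infty(\Omega)$ and then pass to the limit with Fatou's lemma.

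The paper's Fubini argument needs no a priori finiteness. But it applies the pointwise representation $u(\theta,r)=\int_0^r\partial_r u(\theta,s)\,ds$ directly to a general element of $H_0^1(\Omega;w)$ without justification. That would require absolute continuity on almost every fiber and a vanishing trace at $r=0$. Your explicit density-plus-Fatou step is what makes either argument rigorous, so on this point your write-up is more careful than the paper's.
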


\begin{proof}
	For each $\be\in (\al,1)$, we compute
	\begin{equation*}
		\begin{split}
			\int_\Om r^{\al-2}u^2\df z
			&=\int_\mbT\int_0^1r^{\al-2}|u(\theta,r)-u(\theta,0)|^2\df r\df \theta=\int_\mbT \int_0^1r^{\al-2}\left|\int_0^r\pt_r u(\theta,s)\df s\right|^2\df r\df \theta\\
			&\leq \int_\mbT\int_0^1r^{\al-2}\left(\int_0^rs^\be |\pt_ru(\theta,s)|^2\df s\right)\left( \int_0^rs^{-\be}\df s\right)\df r\df \theta\\
			&\leq \f{1}{1-\be}\int_\mbT \int_0^1\int_0^r r^{\al-\be-1}s^\be |\pt_r u(\theta,s)|^2\df s\df r\df \theta\\
			&=\f{1}{1-\be}\int_\mbT \int_0^1\int_s^1r^{\al-\be-1}s^\be |\pt_ru(\theta,s)|^2\df r\df s\df \theta\\
			&\leq \f{1}{(1-\be)(\be-\al)}\int_\mbT \int_0^1 s^\al |\pt_ru(\theta,s)|^2\df s\df \theta\\
			&=\f{1}{(1-\be)(\be-\al)}\int_\Om r^\al |\pt_ru(\theta,s)|^2\df z.
		\end{split}
	\end{equation*}
	 Taking $\be=\f{1+\al}{2}$, we   complete the proof of the lemma.
\end{proof}

\begin{remark}\label{01.06.R1}
	From Lemma \ref{01.04.L1} we have
	\begin{equation*}
		\int_\Om u^2\df z\leq  \int_\Om r^{\al-2}u^2\df z\leq C\int_\Om \nabla u\cdot A\nabla u\df z,
	\end{equation*}
	where the constant   $C>0$ depends only on $\al$.  Hence, in what follows, we use the inner product and norm
	\begin{equation*}
		(u,v)_{H_0^1(\Om;w)}=\int_\Om \nabla u\cdot A\nabla v\df x,\quad \|u\|_{H_0^1(\Om;w)}=(u,u)_{H_0^1(\Om;w)}^\f{1}{2}
	\end{equation*}
	on $H_0^1(\Om;w)$, respectively.
\end{remark}

\begin{lemma}\label{01.05.L4}
	Let $\al\in (0,1)$. Then,  the embedding $H_0^1(\Om;w)\hra L^2(\Om)$ is compact.
\end{lemma}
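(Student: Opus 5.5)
The plan is to show that every bounded sequence in $H_0^1(\Om;w)$ has a subsequence converging in $L^2(\Om)$. We combine three ingredients: the Hardy inequality of Lemma \ref{01.04.L1} to control mass near the degenerate boundary $\mbT\times\{0\}$, the classical Rellich--Kondrachov theorem away from it, and a diagonal argument.

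Let $\{u_n\}$ be bounded in $H_0^1(\Om;w)$, say $\|u_n\|_{H_0^1(\Om;w)}\le M$, using the norm fixed in Remark \ref{01.06.R1}. By density of $C_0^\iy(\Om)$ we may work with smooth functions whenever an estimate is needed.

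\textbf{Step 1 (smallness near $r=0$).} For $\de\in(0,1)$, since $r^{\al-2}\ge \de^{\al-2}$ on $\{r<\de\}$ and $\al-2<0$, Lemma \ref{01.04.L1} gives
\begin{equation*}
\int_{\mbT\times(0,\de)}u_n^2\df z\le \de^{2-\al}\int_\Om r^{\al-2}u_n^2\df z\le C\de^{2-\al}M^2 .
\end{equation*}
This bound is uniform in $n$ and tends to $0$ as $\de\to0$.

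\textbf{Step 2 (compactness away from $r=0$).} On $\Om_\de=\mbT\times(\de,1)$ we have $r^\al\ge\de^\al$, so
\begin{equation*}
\int_{\Om_\de}|\nabla u_n|^2\df z\le \de^{-\al}M^2 .
\end{equation*}
Together with the $L^2$ bound from Remark \ref{01.06.R1}, this shows $\{u_n\}$ is bounded in the unweighted space $H^1(\Om_\de)$. Now $\Om_\de$ is a compact one-dimensional manifold times an interval, hence a bounded Lipschitz domain locally (or simply periodic in $\theta$). The Rellich--Kondrachov theorem therefore gives a subsequence converging in $L^2(\Om_\de)$.

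\textbf{Step 3 (diagonal argument).} Take $\de_k=1/k$ and extract nested subsequences, converging in $L^2(\Om_{\de_k})$ for each $k$. Let $\{v_m\}$ be the diagonal subsequence. For given $\e>0$, choose $k$ with $C\de_k^{2-\al}M^2<\e^2/8$. Then
\begin{equation*}
\|v_m-v_l\|_{L^2(\Om)}^2\le \|v_m-v_l\|^2_{L^2(\Om_{\de_k})}+2\bigl(\|v_m\|^2_{L^2(\mbT\times(0,\de_k))}+\|v_l\|^2_{L^2(\mbT\times(0,\de_k))}\bigr),
\end{equation*}
and the right-hand side is less than $\e^2$ for $m,l$ large. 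Hence $\{v_m\}$ is Cauchy in $L^2(\Om)$, and the embedding is compact. The embedding is also continuous, by Remark \ref{01.06.R1}.

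The main technical point is Step 2. One must make sure that elements of $H_0^1(\Om;w)$ restricted to $\Om_\de$ genuinely belong to $H^1(\Om_\de)$, with weak derivatives coinciding with those defining the weighted norm. This follows by passing to the limit from $C_0^\iy(\Om)$ approximants, using that the weighted and unweighted norms are equivalent on $\Om_\de$. The application of Rellich's theorem on $\mbT\times(\de,1)$ is standard, since no boundary regularity issue arises in the periodic variable. Everything else is a direct consequence of Lemma \ref{01.04.L1}.
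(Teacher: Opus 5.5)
Your proof is correct and follows the same route as the paper. Both use Hardy's inequality (Lemma \ref{01.04.L1}) to make the mass in $\mathbb{T}\times(0,\delta)$ uniformly small, and Rellich--Kondrachov on $\Omega_\delta$ for the rest; the only difference is bookkeeping, since the paper first extracts a weak limit in $H_0^1(\Omega;w)$ and reduces to limit $0$ so that a single $\delta_0$ suffices, whereas your diagonal argument over $\delta_k$ shows directly that the subsequence is Cauchy in $L^2(\Omega)$.
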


\begin{proof}
	 Assume that the sequence $\{u_n\}_{n\in\N}\s H_0^1(\Om;w)$ is a bounded sequence.
 Our objective is to demonstrate the existence of a subsequence of  $\{u_n\}_{n\in\N}$ that converges strongly in $L^2(\Om)$.
  Given that  $H_0^1(\Om;w)$ is a Hilbert space, there exists a subsequence of $\{u_n\}_{n\in\N}$, which is
  still denoted by itself, and an element $u_0\in H_0^1(\Om;w)$ such that $u_n\ura u_0$ weakly in $H_0^1(\Om;w)$.
   To prove that  $u_n\ra u$ strongly in $L^2(\Om)$, it suffices to consider an abstract subsequence and establish the result for the case
     $u_0=0$  (if this is not the case, we can replace  $u_n$ with $u_n-u_0$ for all $n\in\N$).
	
	Let $\e>0$ be arbitrary. On the one hand, for each $\de\in (0,\f{1}{4})$, define
 $\Om_\de=\mbT\ts (\de,1)$. By  Lemma \ref{01.04.L1}, we have
	\begin{equation*}
		\begin{split}
			\int_{\Om-\Om_\de}u^2\df x
			&=\int_{\Om-\Om_\de}r^{2-\al}r^{\al-2}u^2\df x\leq \de^{2-\al}\int_\Om r^{\al-2}u^2\df x\\
			&\leq C\de^{2-\al}\int_\Om \nabla u\cdot A\nabla u\df x=C\de^{2-\al}\|u\|_{H_0^1(\Om;w)}^2
		\end{split}
	\end{equation*}
	for all $u\in H_0^1(\Om;w)$. Then, we select $\de_0>0$   sufficiently small such that for all
 $\de\in (0,\de_0]$ such that $\int_{\Om-\Om_\de}u_n^2\df x<\f{1}{4}\e^2$ for all $n\in\N$. On the other hand, for this fixed $\de_0$,
   since the embedding $H^1(\Om_{\de_0};w|_{\Om_{\de_0}})=H^1(\Om_{\de_0})\hra L^2(\Om_{\de_0})$ is compact and  $u_n\ura 0$ weakly in $H^1(\Om_{\de_0};w|_{\Om_{\de_0}})$,   there exists at least one $n_\e\in\N$ such that $\|u_{n_\e}\|_{L^2(\Om_{\de_0})}<\f{1}{2}\e$.  Combining these two facts, we conclude that $\|u_{n_\e}\|_{L^2(\Om)}<\e$. This completes the proof of the lemma.
\end{proof}

\begin{notation}\label{01.05.N1}
According to Lemma \ref{01.05.L4}, the partial differential operator
$\mcA$
 possesses a discrete point spectrum given by	
	\begin{equation*}
		0<\la_1< \la_2\leq \la_3\leq \cdots \ra +\iy,
	\end{equation*}
	 which means that $\la_n\in\R$ satisfies the following equation
	\begin{equation}\label{01.06.7}
		\begin{cases}
			\mcA\Phi_n=\la_n\Phi_n, &\mbox{in }\Om,\\
			\Phi_n=0, &\mbox{on }\pt\Om.
		\end{cases}
	\end{equation}
	We denote $\{\Phi_n\}_{n\in\N}\s D(\mcA)$ the normalized basis for $L^2(\Om)$, i.e., $\|\Phi_n\|_{L^2(\Om)}=1$ for all $n\in\N$
(see e.g., \cite[Theorem 7, Appendix D.6, p.728]{Evans}.
\end{notation}

From Remark \ref{01.06.R1}, we obtaion
\begin{equation*}
	\f{(1-\al)^2}{4}<\la_1=\inf_{0 \neq u\in H_0^1(\Om;w)}\f{\int_\Om  \nabla u\cdot A\nabla u\df z}{\int_\Om u^2\df z}.
\end{equation*}

\begin{definition}\label{01.06.D1}
	A function
	\begin{equation*}
		\vp\in L^2(0,T; H_0^1(\Om;w)), \mbox{ with } \pt_t\vp\in L^2(0,T; H^{-1}(\Om;w))
	\end{equation*}
	 is considered a weak solution of the equation \eqref{12.14.1} with respect to $(\vp^0,f)$, provided that
	
	(i) for each $v\in H_0^1(\Om;w)$  and almost every  $t\in [0,T]$, we have
	\begin{equation*}
		\lg \pt_t\vp, v\rg_{H^{-1}(\Om;w), H_0^1(\Om;w)}+(\vp, v)_{H_0^1(\Om;w)}=(f,v)_{L^2(\Om)},
	\end{equation*}
	
	(ii) $\vp(0)=\vp^0$.
\end{definition}

\begin{theorem}\label{01.06.T1}
	 Under Assumption \ref{Assumption (H)}, the equation \eqref{12.14.1} with respect to  $(\vp^0,f)$
admits a unique weak solution
	\begin{equation*}
		\vp\in L^2(0,T; H_0^1(\Om;w)), \mbox{ with } \pt_t\vp\in L^2(0,T; H^{-1}(\Om;w))
	\end{equation*}
	 that satisfies the following estimate:
	\begin{equation*}
		\esssup_{t\in [0,T]}\|\vp(t)\|_{L^2(\Om)}+\|\vp\|_{L^2(0,T;H_0^1(\Om;w))}+\|\pt_t\vp\|_{L^2(0,T; H^{-1}(\Om;w))}\leq C\left(\|f\|_{L^2(Q)}+\|\vp^0\|_{L^2(\Om)}\right),
	\end{equation*}
	where the constant $C>0$ depends only on $\al$ and $T$.
	
	 Furthermore, if we additionally assume $\vp^0\in H_0^1(\Om;w)$, then the weak solution
	\begin{equation*}
		\vp\in L^2(0,T; D(\mcA))\cap C([0,T]; H_0^1(\Om;w)), \mbox{ with } \pt_t\vp\in L^2(Q),
	\end{equation*}
	and has the estimate
	\begin{equation*}
		\esssup_{t\in [0,T]}\|\vp(t)\|_{H_0^1(\Om;w)}+\|\vp\|_{L^2(0,T; D(\mcA))}+\|\pt_t\vp\|_{L^2(Q)}\leq C\left(\|f\|_{L^2(Q)}+\|\vp^0\|_{H_0^1(\Om;w)}\right),
	\end{equation*}
	where the constant $C>0$ depends only on $\al$ and $T$.
\end{theorem}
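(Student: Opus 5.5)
The plan is to follow the standard Galerkin/Lions framework adapted to the weighted Gelfand triple $H_0^1(\Om;w)\hra L^2(\Om)\hra H^{-1}(\Om;w)$. By Remark \ref{01.06.R1}, the bilinear form $a(u,v)=\int_\Om \nabla u\cdot A\nabla v\,\df z$ is continuous and coercive on $H_0^1(\Om;w)$, with $a(u,u)=\|u\|^2_{H_0^1(\Om;w)}$. By Lemma \ref{01.05.L4} and Notation \ref{01.05.N1}, the eigenfunctions $\{\Phi_n\}$ form an orthonormal basis of $L^2(\Om)$. They are also orthogonal in $H_0^1(\Om;w)$, since $(\Phi_n,\Phi_m)_{H_0^1(\Om;w)}=\la_n\delta_{nm}$. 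This makes the spectral Galerkin scheme especially clean.

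For existence, I will set $\vp_N(t)=\sum_{n\le N} c_n(t)\Phi_n$. The coefficients then solve decoupled ODEs
\begin{equation*}
c_n'+\la_n c_n=(f(t),\Phi_n)_{L^2(\Om)},\qquad c_n(0)=(\vp^0,\Phi_n)_{L^2(\Om)}.
\end{equation*}
Testing with $\vp_N$ gives $\frac12\frac{d}{dt}\|\vp_N\|_{L^2}^2+\|\vp_N\|_{H_0^1(\Om;w)}^2\le \|f\|_{L^2}\|\vp_N\|_{L^2}$. Gronwall then yields uniform bounds in $L^\iy(0,T;L^2)\cap L^2(0,T;H_0^1(\Om;w))$, with constants depending on $T$ only; $\al$ enters only through the norm equivalence. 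For $\pt_t\vp_N$ I will use duality: for $v\in H_0^1(\Om;w)$, write $v=P_Nv+(I-P_N)v$. Because the basis is orthogonal in both spaces, $\|P_Nv\|_{H_0^1(\Om;w)}\le\|v\|_{H_0^1(\Om;w)}$, which gives $\|\pt_t\vp_N\|_{H^{-1}(\Om;w)}\le \|\vp_N\|_{H_0^1(\Om;w)}+C\|f\|_{L^2}$. Weak-$*$ and weak compactness then give a limit satisfying (i). The initial condition (ii) makes sense because $\vp\in C([0,T];L^2(\Om))$, which follows from the Lions--Magenes lemma for the triple (valid since $H_0^1(\Om;w)$ is dense in $L^2(\Om)$, as it contains $C_0^\iy$). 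The a priori estimates pass to the limit by lower semicontinuity.

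For uniqueness, take the difference $\psi$ of two solutions with zero data and test with $\psi$ itself. Lions--Magenes gives $\frac{d}{dt}\|\psi\|^2_{L^2}=2\lg\pt_t\psi,\psi\rg$, so $\|\psi(t)\|_{L^2}^2+2\int_0^t\|\psi\|^2_{H_0^1(\Om;w)}=0$.

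For the regularity part with $\vp^0\in H_0^1(\Om;w)$, I will test the Galerkin system with $\pt_t\vp_N$ and with $\mcA\vp_N=\sum\la_nc_n\Phi_n$. This is legitimate precisely because the Galerkin space is spanned by eigenfunctions. It gives
\begin{equation*}
\|\pt_t\vp_N\|_{L^2}^2+\frac{d}{dt}\|\vp_N\|_{H_0^1(\Om;w)}^2\le \|f\|_{L^2}^2,
\end{equation*}
and $\|\mcA\vp_N\|_{L^2}\le\|\pt_t\vp_N\|_{L^2}+\|f\|_{L^2}$. Here $\|\vp_N(0)\|_{H_0^1(\Om;w)}\le\|\vp^0\|_{H_0^1(\Om;w)}$, again by orthogonality. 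Passing to the limit gives $\vp\in L^2(0,T;D(\mcA))$ and $\pt_t\vp\in L^2(Q)$. Continuity into $H_0^1(\Om;w)$ follows from the same Lions--Magenes argument applied to the triple $D(\mcA)\subset H_0^1(\Om;w)\subset L^2(\Om)$, where $H_0^1(\Om;w)$ is identified with $D(\mcA^{1/2})$ via the spectral representation $\|u\|^2_{H_0^1(\Om;w)}=\sum\la_n(u,\Phi_n)^2$.

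The main obstacle is justifying these functional-analytic identifications in the weighted, degenerate setting. Specifically, I must show that $D(\mcA)$ as defined coincides with the domain of the self-adjoint operator associated with $a$, namely $\{u:\sum\la_n^2(u,\Phi_n)^2<\iy\}$, and that $H_0^1(\Om;w)$ equals the form domain. I will handle this through the Friedrichs extension/Lax--Milgram: $a$ is closed and coercive on $H_0^1(\Om;w)$, so the associated operator is self-adjoint with compact resolvent, and its eigenexpansion gives both characterizations. Taking $\mcA u$ in the distributional sense on $\Om$, the definition of $D(\mcA)$ then agrees.
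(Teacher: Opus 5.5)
Your proposal is correct and takes the same route as the paper: the paper's entire proof is the remark that the result follows from the classical Galerkin method. Your spectral Galerkin scheme on the eigenbasis $\{\Phi_n\}$ of Notation~\ref{01.05.N1}, with the Lions--Magenes lemma for time continuity, the energy identity for uniqueness, and the identification of $D(\mcA)=H_0^1(\Om;w)\cap H^2(\Om;w)$ with the form-operator domain via density of $C_0^\iy(\Om)$ in $H_0^1(\Om;w)$, is a fully worked-out instance of that remark and makes explicit a point the paper leaves implicit.
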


\begin{proof}
	 The proof of this theorem is accomplished using the classical Galerkin method.
\end{proof}

\subsection{Null controllability, observability and HUM}

 Here, we introduce the Hilbert Uniqueness Method (HUM).

\begin{definition}\label{01.17.D1}
	 We say that the system \eqref{12.14.1} is null controllable if there exists a control   $f\in L^2(Q)$ such that $\vp(T)=0$
 throughout
  $\Om$.
\end{definition}

\begin{definition}
	 We refer to the system
	\begin{equation}\label{01.17.11}
		\begin{cases}
			\pt_t\vp+\Div(A\nabla \vp)=0, &\mbox{in }Q, \\
			\vp=0, &\mbox{on }\pt Q, \\
			\vp(0)=\vp^0, &\mbox{in }\Om
		\end{cases}
	\end{equation}
	 as being observable on   $\om\Subset\Om$ provided that there exists a constant
 $C>0$, depending only on $\al, T, \om$ and $\Om$,   such that the following estimate holds:
	\begin{equation*}
		\int_\Om \vp^2(T)\df z\leq C\iint_{\om\ts (0,T)} \vp^2\df z\df t.
	\end{equation*}
\end{definition}

 The following theorem presents the HUM.

\begin{theorem}\label{01.17.T4}
	 The null controllability of the system \eqref{12.14.1} is equivalent to the observability of the system \eqref{01.17.11}.
\end{theorem}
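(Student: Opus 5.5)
The plan is the standard duality argument of J.-L. Lions, in the form of the identity between the control-to-state map and the observation operator. Set $S(t)$ for the semigroup generated by $-\mcA$ on $L^2(\Om)$. By Notation \ref{01.05.N1} this semigroup is self-adjoint and contractive, and it is given by $S(t)\psi=\sum_n e^{-\la_n t}(\psi,\Phi_n)\Phi_n$. (The backward-sign system \eqref{01.17.11} should be read as the adjoint system, i.e.\ the forward equation for $\psi$ solved backward from time $T$. Since $\mcA$ is self-adjoint, it is equivalently the free evolution \eqref{12.14.1} with $f=0$ up to time reversal.) Take $\om$ to be the control region, so that $\chi_D=\chi_\om$ in the time-independent setting of the definition.

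\textbf{Step 1 (Duhamel and the key identity).} By Theorem \ref{01.06.T1}, the solution satisfies
\begin{equation*}
\vp(T;f)=S(T)\vp^0+L_Tf,\qquad L_Tf=\int_0^T S(T-s)\chi_\om f(s)\,\df s,
\end{equation*}
and $L_T\colon L^2(Q)\to L^2(\Om)$ is bounded. Its adjoint is
\begin{equation*}
L_T^*\psi_T=\chi_\om S(T-\cdot)\psi_T.
\end{equation*}
This is exactly the restriction to $\om\times(0,T)$ of the adjoint solution with terminal datum $\psi_T$. Null controllability means $\operatorname{Range}S(T)\subseteq\operatorname{Range}L_T$.

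\textbf{Step 2 (Observability $\Rightarrow$ controllability).} After time reversal, observability reads
\begin{equation*}
\|S(T)\psi_T\|^2\le C\|L_T^*\psi_T\|^2_{L^2(Q)}.
\end{equation*}
We then minimize the coercive, strictly convex functional
\begin{equation*}
J(\psi_T)=\tfrac12\iint_{\om\times(0,T)}|S(T-t)\psi_T|^2\,\df z\,\df t+(\vp^0,S(T)\psi_T)_{L^2(\Om)}.
\end{equation*}
Coercivity holds because observability gives
\begin{equation*}
|(\vp^0,S(T)\psi_T)|\le C\|\vp^0\|\,\|L_T^*\psi_T\|.
\end{equation*}
There is one technicality: $J$ is coercive only in the seminorm $\|L_T^*\cdot\|$. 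The remedy is to work on the completion $H$ of $L^2(\Om)$ under that seminorm, which is a norm by backward uniqueness and the spectral expansion. On $H$, Lax--Milgram (or the direct method) gives a minimizer $\hat\psi$. Set
\begin{equation*}
f=\chi_\om S(T-\cdot)\hat\psi=\lim L_T^*\psi_k
\end{equation*}
in $L^2(Q)$. The Euler--Lagrange equation then gives
\begin{equation*}
(\vp(T;f),\psi_T)=(L_Tf,\psi_T)+(S(T)\vp^0,\psi_T)=0\quad\text{for all }\psi_T.
\end{equation*}
Hence $\vp(T)=0$, with $\|f\|_{L^2(Q)}\le C\|\vp^0\|$.

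\textbf{Step 3 (Controllability $\Rightarrow$ observability).} We use the Douglas range-inclusion lemma: $\operatorname{Range}S(T)\subseteq\operatorname{Range}L_T$ is equivalent to $\|S(T)^*\psi\|\le C\|L_T^*\psi\|$. Since $S(T)^*=S(T)$, this is precisely the observability inequality. A self-contained alternative: the closed graph theorem gives a bounded control map $\vp^0\mapsto f$ of norm at most $C$. Pairing $0=\vp(T;f)$ with $\psi_T$ then yields
\begin{equation*}
(\vp^0,S(T)\psi_T)=-(f,L_T^*\psi_T),
\end{equation*}
and taking $\vp^0=S(T)\psi_T$ gives the inequality.

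The main obstacle I expect is the closed-graph step in Step 3. The difficulty is choosing a linear, bounded selection of controls; it is resolved by taking the minimal-norm control, which is the projection onto $(\ker L_T)^\perp$. The weighted degeneracy plays no role beyond the well-posedness and self-adjointness already provided by Theorem \ref{01.06.T1} and Notation \ref{01.05.N1}.
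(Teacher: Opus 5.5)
Your proof is correct and takes the same approach as the paper, which just cites the abstract controllability/observability duality theorem in Coron's book; your Steps 1--3 (the Duhamel/adjoint identity, HUM minimization on the completed space, and the range-inclusion lemma or closed-graph argument) are the standard proof of that abstract result. Your reinterpretations are exactly what the statement needs in order to make sense: you read \eqref{01.17.11} as the time-reversed free evolution rather than an ill-posed backward problem with initial data, and you take the control set to be $D=\omega\times(0,T)$.
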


\begin{proof}
	 The desired result follows from \cite[Theorem 2.42, p. 56]{Coron}.
\end{proof}

\section{Separate variables}\label{S3}

 In this section, we employ the method of separable variables to transform equation \eqref{12.14.1} into a series of degenerate parabolic equations \eqref{01.16.1}.

We recall that $\{g_{1,n}=\f{1}{\sqrt {2\pi}}\cos n\theta\}_{n\in\N}\cup \{g_{2,n}=\f{1}{\sqrt {2\pi}}\sin n\theta\}_{n\in\N^*}$ are the orthonormal basis of $L^2(\mbT)$, and
\begin{equation*}
	H^k(\mbT)=\left\{\sum_{n\in\N} a_{1,n} g_{1,n}+\sum_{n\in\N^*} a_{2,n}g_{2,n}\colon (a_{1,0})^2+\sum_{n\in\N^*} n^{2k}\left((a_{1,n})^2+(a_{2,n})^2\right) <+\iy\right\}
\end{equation*}
with the norm defined as
\begin{equation*}
	\|u\|_{H^k(\mbT)}^2=(a_{1,0})^2+\sum_{n\in\N^*} n^{2k}\left((a_{1,n})^2+(a_{2,n})^2\right), \mbox{ with } u=\sum_{n\in\N} a_{1,n} g_{1,n}+\sum_{n\in\N} a_{2,n}g_{2,n}.
\end{equation*}

 We formally assume
\begin{equation}\label{01.16.3}
	\vp(\theta, r,t)=\sum_{n\in\N} \vp_{1,n}(r,t)g_{1,n}(\theta)+\sum_{n\in\N^*}\vp_{2,n}(r,t)g_{2,n}(\theta),
\end{equation}
 which leads to
\begin{equation}\label{01.16.1}
	\begin{split}
		\pt_t\vp_{i,n}+\mf{A}_2\vp_{i,n}+n^2\vp_{i,n}=f_{i,n}, \ n\in\N,\ i=1,2,
	\end{split}
\end{equation}
where
\begin{equation*}
	f_{i,n}=(f,g_{i,n})_{L^2(\mbT)}, \ n\in \N,\ i=1,2.
\end{equation*}

 Next, we will analyze the equations \eqref{01.16.1} for all $n\in\N, i=1,2$.  To this end, we introduce the weighted Sobolev spaces.

Define
\begin{equation*}
	H^1(\La;w)=\left\{u\in L^2(\La)\colon \int_\La w(\pt_ru)^2\df r<+\iy\right\}
\end{equation*}
 with the inner product and norm given by
\begin{equation*}
	(u,v)_{H^1(\La;w)}=\int_\La \big(uv+w(\pt_ru)(\pt_r v)\big)\df r, \quad \|u\|_{H^1(\La;w)}=(u,u)_{H^1(\La;w)}^\f{1}{2},
\end{equation*}
respectively. Set
\begin{equation*}
	\begin{split}
	H_0^1(\La;w)
	&=\mbox{the closure of $C_0^\iy(\La)$ in }H^1(\La;w), \\
	H^{-1}(\La;w)
	&=\mbox{the dual space of } H_0^1(\La;w).
	\end{split}
\end{equation*}

Define
\begin{equation*}
	H^2(\La;w)=\left\{u\in H^1(\La;w)\colon \mf{A}_2u\in L^2(\La)\right\}
\end{equation*}
 with the inner product and norm defined as
\begin{equation*}
	(u,v)_{H^2(\La;w)}=\int_\La (\mf{A}_2u)(\mf{A}_2v)\df r+(u,v)_{H^1(\La;w)},\quad \|u\|_{H^2(\La;w)}=(u,u)_{H^2(\La;w)}^\f{1}{2},
\end{equation*}
respectively. Set
\begin{equation*}
	D(\mf{A}_2)=H_0^1(\La;w)\cap H^2(\La;w).
\end{equation*}

\begin{lemma}\label{01.16.L2}
	Let $\al\in (0,1)$. Then there exists a constant $C>0$, depending only on $\al$, such that
	\begin{equation*}
		\int_\La r^{\al-2}u^2\df r\leq C\int_\La r^\al (\pt_ru)^2\df r
	\end{equation*}
	for all $u\in H_0^1(\La;w)$.
\end{lemma}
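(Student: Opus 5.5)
The argument is the one-dimensional version of the proof of Lemma \ref{01.04.L1}, with the $\theta$-integration dropped. By density of $C_0^\iy(\La)$ in $H_0^1(\La;w)$, it suffices to prove the inequality for $u\in C_0^\iy(\La)$ with a constant depending only on $\al$. For general $u$ we then pass to the limit: take $u_k\in C_0^\iy(\La)$ with $u_k\to u$ in $H^1(\La;w)$, extract a subsequence converging a.e., and apply Fatou's lemma on the left-hand side, while the right-hand side converges.

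For $u\in C_0^\iy(\La)$ we have $u(0)=0$, so $u(r)=\int_0^r\pt_ru(s)\df s$. Fix $\be\in(\al,1)$ and apply Cauchy--Schwarz with the weights $s^{\be}$ and $s^{-\be}$:
\begin{equation*}
|u(r)|^2\leq \left(\int_0^r s^\be|\pt_ru(s)|^2\df s\right)\left(\int_0^r s^{-\be}\df s\right)=\f{r^{1-\be}}{1-\be}\int_0^r s^\be|\pt_ru(s)|^2\df s .
\end{equation*}
Here $\be<1$ guarantees that $s^{-\be}$ is integrable near $0$.

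Multiply by $r^{\al-2}$, integrate over $(0,1)$, and use Fubini (Tonelli, since everything is nonnegative) to exchange the order. This gives
\begin{equation*}
\int_\La r^{\al-2}u^2\df r\leq \f{1}{1-\be}\int_0^1 s^\be|\pt_ru(s)|^2\left(\int_s^1 r^{\al-\be-1}\df r\right)\df s .
\end{equation*}
Since $\al-\be<0$, the inner integral is at most $s^{\al-\be}/(\be-\al)$. This yields the bound $\f{1}{(1-\be)(\be-\al)}\int_\La r^\al(\pt_ru)^2\df r$. Choosing $\be=\f{1+\al}{2}$ gives the constant $C=4/(1-\al)^2$, which depends only on $\al$.

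The only delicate point is the choice of $\be$. It must lie strictly between $\al$ and $1$: the condition $\be<1$ makes $s^{-\be}$ integrable at $0$, and the condition $\be>\al$ makes the exponent $\al-\be-1$ less than $-1$, so that the $r$-integral near $s$ produces the factor $s^{\al-\be}$ and restores the weight $s^\al$. Both conditions can be met precisely because $\al<1$. The density and limiting step is routine.
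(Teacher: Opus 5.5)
Your proof is correct and is the same argument the paper uses: it defers to the proof of Lemma~\ref{01.04.L1}, which writes $u(r)=\int_0^r\pt_r u\,\mathrm{d}s$, applies Cauchy--Schwarz with weights $s^{\pm\be}$, exchanges the order of integration, and takes $\be=\f{1+\al}{2}$, giving $C=4/(1-\al)^2$. Your density-plus-Fatou step is a useful addition, since the paper uses the vanishing of $u$ at $r=0$ for general elements of the weighted space without justifying it.
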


\begin{proof}
	 The proof follows similarly to that of Lemma \ref{01.04.L1}.
\end{proof}

\begin{remark}\label{01.16.R11}
	From Lemma \ref{01.16.L2}, we have
	\begin{equation*}
		\int_\La u^2\df r\leq \int_\La r^{\al-2}u^2\df r\leq C\int_\La r^\al (\pt_ru)^2\df r,
	\end{equation*}
	where the constant $C>0$ depends only on $\al$.  Hence, in the following, we utilize the inner product and norm
	\begin{equation*}
		(u,v)_{H_0^1(\La;w)}=\int_\La w(\pt_ru)(\pt_rv)\df r,\quad \|u\|_{H_0^1(\La;w)}=(u,u)_{H_0^1(\La;w)}^\f{1}{2}
	\end{equation*}
	on $H_0^1(\La;w)$, respectively.
\end{remark}

\begin{lemma}\label{01.16.L3}
	Let $\al\in (0,1)$. Then the embedding $H_0^1(\La;w)\hra L^2(\La)$ is compact.
\end{lemma}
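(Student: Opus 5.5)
We follow the proof of Lemma \ref{01.05.L4}, now in one dimension. The plan is to combine the Hardy inequality of Lemma \ref{01.16.L2}, which controls the mass near the degenerate endpoint $r=0$, with the classical Rellich compactness on intervals bounded away from $0$, where the weight $w=r^\alpha$ is bounded above and below.

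Let $\{u_n\}_{n\in\N}\s H_0^1(\La;w)$ be bounded, say $\|u_n\|_{H_0^1(\La;w)}\le M$. Since $H_0^1(\La;w)$ is a Hilbert space, we extract a subsequence converging weakly to some $u_0\in H_0^1(\La;w)$. Replacing $u_n$ by $u_n-u_0$, we may assume $u_0=0$. It then suffices to show that $u_n\ra 0$ strongly in $L^2(\La)$ along a further subsequence.

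Fix $\e>0$ and $\de\in(0,\f14)$. By Lemma \ref{01.16.L2},
\begin{equation*}
\int_0^\de u_n^2\df r=\int_0^\de r^{2-\al}r^{\al-2}u_n^2\df r\le \de^{2-\al}\int_\La r^{\al-2}u_n^2\df r\le C\de^{2-\al}M^2,
\end{equation*}
with $C$ depending only on $\al$. We choose $\de$ so small that the right-hand side is below $\e^2/4$ uniformly in $n$; this uses $2-\al>0$.

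On $(\de,1)$ we have $\de^\al\le r^\al\le 1$. Hence $\{u_n|_{(\de,1)}\}$ is bounded in the ordinary space $H^1(\de,1)$, with bound $\de^{-\al}M^2$ plus the $L^2$ bound coming from Remark \ref{01.16.R11}. Restriction is continuous from $H_0^1(\La;w)$ to $H^1(\de,1)$, so $u_n\ura0$ weakly in $H^1(\de,1)$. The Rellich theorem on the bounded interval $(\de,1)$ then gives $u_n\ra0$ strongly in $L^2(\de,1)$. So for $n$ large, $\|u_n\|_{L^2(\de,1)}<\e/2$.

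Combining the two pieces yields $\|u_n\|_{L^2(\La)}<\e$ for all large $n$, so $u_n\ra0$ in $L^2(\La)$. This step is in fact slightly cleaner than the argument in Lemma \ref{01.05.L4}: the limit is $0$ for the entire weakly convergent subsequence, so we obtain convergence of the whole subsequence rather than just a single index.

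There is no real obstacle here. The only point needing care is identifying the weak limit on $(\de,1)$ with $0$, i.e.\ checking that weak convergence in $H_0^1(\La;w)$ passes to weak convergence in $H^1(\de,1)$ under restriction. This holds because restriction is a bounded linear map and bounded linear maps preserve weak convergence.
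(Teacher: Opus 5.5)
Your proof is correct and follows the paper's own argument. The paper only says the proof is analogous to Lemma \ref{01.05.L4}, and that proof uses exactly your splitting: the Hardy inequality makes $\int_0^\delta u_n^2\,\mathrm{d}r$ uniformly small, and Rellich compactness applies on $(\delta,1)$, where $r^\alpha$ is bounded above and below. Your remark that this gives $\|u_n\|_{L^2(\Lambda)}<\varepsilon$ for all large $n$, not just for one index $n_\varepsilon$ as the paper writes, is also right, because the bounded restriction followed by the compact embedding $H^1(\delta,1)\hookrightarrow L^2(\delta,1)$ sends a weakly null sequence to a strongly null one.
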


\begin{proof}
	 The proof is analogous to that of Lemma \ref{01.05.L4}.
\end{proof}

\begin{notation}\label{01.06.N2} We define
\begin{equation}\label{05.20.1}
	H^1(\mathbb{T})\otimes H_0^1(\Lambda;w)=\left\{\varphi_1(\theta)\varphi_2(r)\colon \varphi_1(\theta)\in H^1(\mathbb{T}), \varphi_2(r)\in H_0^1(\Lambda;w)\right\},
\end{equation}
and
\begin{equation*}
	\begin{split}
		H^1(\mathbb{T})\widehat{\otimes} H_0^1(\Lambda;w)=\overline{\text{Span}\left[H^1(\mathbb{T})\otimes H_0^1(\Lambda;w)\right]}^{\|\cdot\|_{H^1(\Omega;w)}},
	\end{split}
\end{equation*}
where $\text{Span}\ \! H$ denotes the linear span of the space $H$.
\end{notation}

  The following Lemma \ref{05.19.L2} addresses a fundamental issue in Sobolev spaces. It is known that
  $H^1(\mbT)\wh\otimes H_0^1(\Lambda)=H_0^1(\Omega)$, as demonstrated in \cite[Section 9 in Chapter V]{Hilbert}. Specifically,
   the set $\{g_{i,n}(\f{1}{\sqrt{2\pi}}\sin m\pi \theta)\}\subset H^1(\mbT)\otimes H_0^1(\Lambda)$ forms an orthogonal basis
   for  $H_0^1(\Omega)$.

\begin{lemma}\label{05.19.L2}
		$H_0^1(\Omega;w)=H^1(\mbT )\widehat{\otimes} H_0^1(\Lambda;w)$.
\end{lemma}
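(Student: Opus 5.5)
We prove the two inclusions separately; the substantive one is $\subseteq$ from the left-hand side. The inclusion $H^1(\mbT)\wh\otimes H_0^1(\La;w)\subseteq H_0^1(\Om;w)$ is the easy direction. Take $\vp_1\in H^1(\mbT)$ and $\vp_2\in H_0^1(\La;w)$. Approximate $\vp_1$ in $H^1(\mbT)$ by trigonometric polynomials $p_k$, and $\vp_2$ in $H_0^1(\La;w)$ by functions $q_k\in C_0^\iy(\La)$. Then $p_kq_k\in C_0^\iy(\Om)$, since $\Om=\mbT\ts\La$ and the torus direction has no boundary. Moreover
\begin{equation*}
\|\nabla(uv)\cdot A\nabla(uv)\|_{L^1(\Om)}=\|u'\|_{L^2(\mbT)}^2\|v\|_{L^2(\La)}^2+\|u\|_{L^2(\mbT)}^2\|r^{\al/2}v'\|_{L^2(\La)}^2 .
\end{equation*}
This identity shows that the bilinear map $(u,v)\mapsto uv$ is continuous from $H^1(\mbT)\ts H_0^1(\La;w)$ into $H^1(\Om;w)$; the $L^2(\La)$ norm of $v$ is controlled by Remark \ref{01.16.R11}. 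Hence $p_kq_k\to\vp_1\vp_2$ in $H^1(\Om;w)$, so every elementary tensor lies in $H_0^1(\Om;w)$. The space $H_0^1(\Om;w)$ is a closed linear subspace, so it contains the closed span of the elementary tensors.

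For the reverse inclusion $H_0^1(\Om;w)\subseteq H^1(\mbT)\wh\otimes H_0^1(\La;w)$, it suffices by density to show that every $\psi\in C_0^\iy(\Om)$ belongs to the right-hand side, since that side is closed in $H^1(\Om;w)$. Expand $\psi$ in the Fourier basis of \eqref{01.16.3}:
\begin{equation*}
\psi=\sum_{n}\psi_{1,n}(r)g_{1,n}(\theta)+\sum_{n\ge1}\psi_{2,n}(r)g_{2,n}(\theta),\qquad \psi_{i,n}(r)=(\psi(\cdot,r),g_{i,n})_{L^2(\mbT)} .
\end{equation*}
Each coefficient $\psi_{i,n}$ is smooth with compact support in $\La$. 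Hence $\psi_{i,n}\in C_0^\iy(\La)\subset H_0^1(\La;w)$, and each term $\psi_{i,n}g_{i,n}$ is an elementary tensor. It remains to show that the partial sums $S_N$ converge to $\psi$ in $H^1(\Om;w)$. The functions $g_{i,n}$ and $g_{i,n}'$ are orthogonal in $L^2(\mbT)$, so Parseval in $\theta$ gives
\begin{equation*}
\|\psi-S_N\|_{H^1(\Om;w)}^2=\sum_{n>N,\,i}\int_\La\big((1+n^2)\psi_{i,n}^2+r^\al(\pt_r\psi_{i,n})^2\big)\df r .
\end{equation*}
Here $\pt_r\psi_{i,n}=(\pt_r\psi,g_{i,n})_{L^2(\mbT)}$. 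The full series on the right converges because it equals
\begin{equation*}
\int_\Om\big(\psi^2+(\pt_\theta\psi)^2+r^\al(\pt_r\psi)^2\big)\df z<\iy .
\end{equation*}
Its tail therefore tends to zero, and so $S_N\to\psi$ in $H^1(\Om;w)$.

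The main point needing care is the justification of Parseval in the $r$-integrated form. Concretely, one must interchange the sum over $(i,n)$ with the $r$-integral and identify the Fourier coefficients of $\pt_r\psi$ and $\pt_\theta\psi$. For smooth compactly supported $\psi$ this follows from monotone convergence applied to the nonnegative summands, together with differentiation under the integral sign in $\theta$. An alternative route, if one prefers not to use density of $C_0^\iy$, is the following. Show that any $u\in H_0^1(\Om;w)$ orthogonal in $(\cdot,\cdot)_{H_0^1(\Om;w)}$ to all tensors $g_{i,n}v$ with $v\in H_0^1(\La;w)$ must vanish. The argument takes $v$ to be the Fourier coefficient $u_{i,n}$ itself; this requires knowing that $u_{i,n}\in H_0^1(\La;w)$, which again follows from the continuity of coefficient extraction $H_0^1(\Om;w)\to H_0^1(\La;w)$ established by the same Parseval identity. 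I would present the density argument as the primary proof.
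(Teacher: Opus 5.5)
Your proof is correct and has the same structure as the paper's. The easy inclusion rests on the product energy bound, and the converse on density of $C_0^\infty(\Omega)$ together with the Fourier expansion in $\theta$. There are two small differences. First, you check convergence of the $\theta$-Fourier partial sums directly in the weighted norm, whereas the paper cites the unweighted identity $H_0^1(\Omega)=H^1(\mathbb{T})\widehat{\otimes} H_0^1(\Lambda)$ together with $H_0^1(\Omega)\subset H_0^1(\Omega;w)$. Second, you approximate $\varphi_1\varphi_2$ by products lying in $C_0^\infty(\Omega)$; this genuinely places the elementary tensors in $H_0^1(\Omega;w)$, while the paper's energy bound alone only gives membership in $H^1(\Omega;w)$.
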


\begin{proof}
	For every $\vp_1\in H^1(\mbT)$ and $\vp_2\in H_0^1(\La;w)$, we observe:
	\begin{equation*}
		\begin{split}
			\int_\Omega \nabla(\varphi_1\varphi_2)\cdot  A\nabla(\varphi_1\varphi_2)\df x
			&=\int_\Omega (\pt_\theta\vp_1)^2(\varphi_2)^2dz+\int_\Omega (\varphi_1)^2\left[ r^{\alpha}(\partial_{r}\varphi_2)^2\right]dz\\
			&\leq 2\|\varphi_1\|_{H^1(\mbT)}^2\|\varphi_2\|_{H_0^1(\La;w)}^2,
		\end{split}
	\end{equation*}
	which implies $\varphi_1\varphi_2\in H_0^1(\Omega;w)$. Consequently, $H^1(\mbT)\otimes H_0^1(\Lambda;w)\subset H_0^1(\Omega;w)$. Hence, $H^1(\mbT)\widehat{\otimes} H_0^1(\Lambda;w)\subset H_0^1(\Omega;w)$.

We now prove the converse implication.
Since $C_0^\infty(\Omega)\subset H_0^1(\Omega)\subset H_0^1(\Omega;w)$, and $H_0^1(\Omega)= H^1(\mathbb{T})\widehat{\otimes} H_0^1(\Lambda)$ from Fourier analysis, and $H^1(\mathbb{T})\otimes H_0^1(\Lambda)\subset H^1(\mathbb{T})\widehat{\otimes} H_0^1(\Lambda;w)$, we obtain
\begin{equation*}
	H_0^1(\Omega;w)\subset H^1(\mathbb{T})\widehat{\otimes} H_0^1(\Lambda;w).
\end{equation*}
This completes the proof of the lemma.	
\end{proof}

\begin{remark}\label{01.16.R2}
	 By Lemma \ref{05.19.L2}, the formal decomposition \eqref{01.16.3} is well-defined.
\end{remark}

 Now, we consider the following equations \eqref{01.16.1} for all $n\in\N$ and $i=1,2$.

\begin{notation}\label{01.16.N1}
	 From Lemma \ref{01.16.L3}, the partial differential operator $\mf{A}_2$ has discrete point spectrum
	\begin{equation*}
		0<\la_{2,1}< \la_{2,2}< \la_{2,3}< \cdots \ra +\iy,
	\end{equation*}
	i.e., $\la_{2,n}\in\R$ satisfies the following equation
	\begin{equation}\label{01.16.2}
		\begin{cases}
			\mf{A}_2\Phi_{2,n}=\la_{2,n}\Phi_{2,n}, &\mbox{in }\Om,\\
			\Phi_{2,n}=0, &\mbox{on }\pt\Om.
		\end{cases}
	\end{equation}
	We denote $\{\Phi_{2,n}\}_{n\in\N}\s D(\mf{A}_2)$ as the normalized basis of
 $L^2(\Om)$, i.e., $\|\Phi_{2,n}\|_{L^2(\Om)}=1$ for all $n\in\N$ (see, e.g., \cite[Theorem 7, Appendix D.6, p. 728]{Evans}).
\end{notation}

From Remark \ref{01.16.R11}, we have
\begin{equation*}
	\f{(1-\al)^2}{4}<\la_{2,1}=\inf_{0 \neq u\in H^1_0(\La;w)}\f{\int_\La  w(\pt_ru)^2\df r}{\int_\La u^2\df r}.
\end{equation*}

\begin{remark}\label{01.16.R3}
	Let $\la\in\R$ be the eigenvalue of $\mf{A}_2+n^2$, and $\Psi\in H_0^1(\La;w)$ be the corresponding eigenfunction  of $\mf{A}_2+n^2$, i.e.,
	\begin{equation*}
		\begin{cases}
			(\mf{A}_2+n^2)\Psi=\la\Psi, &\mbox{in }\La, \\
			\Psi=0, &\mbox{on } \La.
		\end{cases}
	\end{equation*}
	Then $\mf{A}_2\Psi=(\la-n^2)\Psi$. This  implies that $\la-n^2$ is an eigenvalue of $\mf{A}_2$, and $\Psi$ is the
 corresponding eigenfunction of $\mf{A}_2$.  Conversely, if $\la$ is the eigenvalue of $\mf{A}_2$, and $\Psi$ is the eigenfunction of $\mf{A}_2$ with respect to $\la$, then $\la+n^2$ is the eigenvalue of $\mf{A}_2+n^2$, and $\Psi$ is the corresponding
 eigenfunction of $\mf{A}_2+n^2$.
\end{remark}

\section{Observability inequalities on special control domains and spectral inequalities}\label{S4}

 In this section, we derive the observability inequality  and the spectral inequality. These findings will serve as crucial tools in Section \ref{S5}.

Theorem \ref{01.17.T1} and Corollary \ref{01.17.C1} are analogous to the results in \cite{Yang2}, and we here provide sharper estimates.

\subsection{Carleman estimate}

Consider the following equation for
 $n\in\N$ and $i=1,2$:
\begin{equation}\label{01.17.1}
	\begin{cases}
		\pt_t\vp_{i,n}+\mf{A}_2\vp_{i,n}+n^2\vp_{i,n}=f_{i,n}, &\mbox{in }\La_T,\\
		\vp_{i,n}=0, &\mbox{on } \pt\La_T, \\
		\vp_{i,n}(0)=\vp_{i,n}^0, &\mbox{in }\La,
	\end{cases}
\end{equation}
where $\vp_{i,n}^0\in L^2(\La)$ and $f_{i,n}\in L^2(Q)$.

\begin{notation}\label{01.17.N1}
Let  $\eta \in C^3[a,1]$ be defined as follows:
\begin{equation*}
	\eta=
	\begin{cases}
		r^{2-\al}, &\mbox{on }(0, \f{2a+b}{3}), \\
		>0, &\mbox{on }(\f{2a+b}{3}, \f{a+2b}{3}),\\
		(1-r)r^{-\al}, &\mbox{on }(\f{a+2b}{3},1),
	\end{cases}
\end{equation*}
and
\begin{equation*}
	\Theta=\f{1}{[t(T-t)]^4},\quad \xi= \Theta \left(\ga-\eta\right),
\end{equation*}
where $\gamma \geq |\eta|_\iy+1\ (|\eta|_\iy=|\eta|_{L^\iy(\La)})$  is a constant to be specified later.
Define
\begin{equation*}
	\psi=e^{-s\xi}\vp_{i,n}\ \Lra \ \vp_{i,n}=e^{s\xi}\psi
\end{equation*}
for $s\geq 1$.
\end{notation}

From Notation \ref{01.17.N1}, we obtain
\begin{equation}\label{01.17.2}
	\psi(t)=\nabla\psi(t)=0 \mbox{ for } t=0, T,
\end{equation}
and
\begin{equation}\label{01.17.3}	
	\psi=\pt_t\psi=0 \mbox{ on }\pt \La_T.
\end{equation}\label{01.17.4}
 Furthermore,
\begin{equation}\label{01.17.5}
	\begin{split}
		\xi_r=-\Theta\eta_r,\quad \xi_t=\Theta'(\ga-\eta),\quad \xi_{rt}=-\Theta'\eta_r, \quad \xi_{tt}=\Theta''(\ga-\eta),
	\end{split}
\end{equation}
and
\begin{equation}\label{01.17.6}
	\begin{split}
		(w\xi_r)_r=-\ga\Theta (w\eta_r)_r, \quad
	\end{split}
\end{equation}
 Equation \eqref{01.17.1} can be rewritten as
\begin{equation}
	e^{-s\xi}f_{i,n}=P_1\psi+P_2\psi,
\end{equation}
where
\begin{equation}\label{gbz1}
	\left\{\begin{array}{ll}
		P_1\psi
		&=\sum_{i=1}^3P_{1i}\psi=\psi_t-2sw\psi_r\xi_r-s\psi (w\xi_r)_r, \\
		P_2\psi
		&=\sum_{i=1}^4P_{2i}\psi=-(w\psi_r)_r+s\psi\xi_t-s^2\psi w(\xi_r )^2+n^2\psi.
	\end{array}\right. 
\end{equation}

\subsubsection{Computation}

  All subsequent applications of integration by parts are valid; for further particulars, refer to \cite[Proposition 2.4, p. 174]{Alabau}. It is important to note that the term $n^2 \psi$ has no impact on the calculations or the subsequent estimations.
  
Firstly, we  calculate $(P_{11}\psi, P_2\psi)_{L^2(\La_T)}$ defined by \dref{gbz1}. 
Indeed, based on \eqref{01.17.2} and \eqref{01.17.3}, we obtain 
\begin{equation*}
\begin{split}
(P_{11}\psi, P_2\psi){L^2(\La_T)}
&=-\frac{s}{2}\iint_{\La_T}\Theta''\psi^2(\gamma-\eta)\mathrm{d} r\mathrm{d} t+s^2\iint_{\La_T}\Theta \Theta' \psi^2w\eta_r^2\mathrm{d} r\mathrm{d} t.
\end{split}
\end{equation*}

Secondly, we calculate $(P_{12}\psi+P_{13}\psi, P_{21}\psi)_{L^2(\La_T)}$.
Indeed, from \eqref{01.17.3} and \eqref{01.17.5}, we derive
 \begin{equation*}
	\begin{split}
		(P_{12}\psi+P_{13}\psi, P_{21}\psi)_{L^2(\La_T)}
		&=-s\int_0^T\Theta \eta_r (w\psi_r)^2 \bigg|_{r=0}^{r=1} \df t +s\iint_{\La_T}\Theta w\psi\psi_r(w\eta_r)_{rr}\df r\df t\\
		&\hspace{4.5mm}+2s\iint_{\La_T}\Theta w\psi_r^2(w\eta_r)_r\df r\df t-s\iint_{\La_T} \Theta w\psi_r^2\eta_rw_r\df r\df t.
	\end{split}
\end{equation*}

Thirdly, we calculate $(P_{12}\psi+P_{13}\psi, P_{22}\psi)_{L^2(\La_T)}$.
Indeed, from \eqref{01.17.3}, we find 
\begin{equation*}
\begin{split}
(P_{12}\psi+P_{13}\psi, P_{22}\psi)_{L^2(\La_T)}=s^2\iint_{\La_T}\Theta \Theta' \psi^2w\eta_r^2\mathrm{d} r\mathrm{d} t.
\end{split}
\end{equation*}

Fourthly, we calculate $(P_{12}\psi+P_{13}\psi, P_{23}\psi)_{L^2(\La_T)}$.
Indeed, from \eqref{01.17.3} and \eqref{01.17.6}, we obtain
\begin{equation*}
\begin{split}
(P_{12}\psi+P_{13}\psi, P_{23}\psi)_{L^2(\La_T)}=s^3\iint_{\La_T} \Theta^3\psi^2 (w\eta_r)\left(w\eta_r^2\right)_r\mathrm{d} r\mathrm{d} t.
\end{split}
\end{equation*}
Finally, direct calculation using \eqref{01.17.3} yields $(P_{12}\psi+P_{13}\psi, P_{24}\psi)_{L^2(\La_T)}=0$.
\vspace{3mm}

\subsubsection{Estimates}

  We define the following regions:
\begin{equation*}
\La_T^1=\left(0, \frac{2a+b}{3}\right)\times (0,T), \quad \La_T^3=\left(\frac{a+2b}{3}, 1\right)\times (0,T),\quad \La_T^2=\La_T-(\La_T^1\cup \La_T^3).
\end{equation*}
We will estimate $(P_1\psi,P_2\psi){L^2(\La_T)}$ defined by \dref{gbz1} using the following decomposition:
\begin{equation*}
E_i=(P_1\psi,P_2\psi)_{L^2(\La_T^i)}, \quad i=1,2,3.
\end{equation*}
Note that the following inequalities hold:
\begin{equation}\label{01.17.7}
|\Theta'|\leq 12 T\Theta^{\frac{5}{4}}, \quad |\Theta''|\leq 196T^2\Theta^{\frac{3}{2}}.
\end{equation}
Additionally, from the condition $r^{1+\alpha}(\psi_r)^2=r^{1-\alpha}(r^\alpha \psi_r)^2=0$ at $r=0$, we have
\begin{equation*}
\begin{split}
-s\int_0^T\Theta \eta_r(w\psi_r)^2 \bigg|_{r=0}^{r=1}  dt\geq 0.
\end{split}
\end{equation*}

Firstly, we  compute $E_1$. On $\La_T^1$, we have 
\begin{equation*}
\begin{split}
\eta_r=(2-\alpha)r^{1-\alpha}, \quad w\eta_r^2=(2-\alpha)^2r^{2-\alpha},\quad (w\eta_r)r=2-\alpha, \quad (w\eta_r){rr}=0,
\end{split}
\end{equation*}
and
\begin{equation*}
\eta_rw_r=(2-\alpha)\alpha,\quad (w\eta_r)(w\eta_r^2)_r=(2-\alpha)^4r^{2-\alpha}.
\end{equation*}
From \eqref{01.17.7} and the inequalities
\begin{equation*}
	\begin{split}
		s\ga\iint_{\La_T^1}\Theta^\f{3}{2}\psi^2\df r\df t
		&\leq \f{1}{2}s^2\ga^2\iint_{\La_T^1}\Theta^2r^{2-\al}\psi^2\df r\df t+\f{1}{2}\iint_{\La_T^1}\Theta r^{\al-2}\psi^2\df r\df t\\
		&\leq \f{3}{2}s^2\ga^2T^2\iint_{\La_T^1}\Theta^3r^{2-\al}\psi^2\df r\df t+\f{2}{(1-\al)^2}\iint_{\La_T}\Theta w(\psi_r)^2\df r\df t
	\end{split}
\end{equation*}
and
\begin{equation*}
	\begin{split}
		\iint_{\La_T^1}\Theta^\f{9}{4}r^{2-\al}\psi^2\df r\df t\leq 8T^6\iint_{\La_T^1}\Theta^3r^{2-\al}\psi^2\df r\df t,
	\end{split}
\end{equation*}
we obtain 
\begin{equation*}
	\begin{split}
		E_1
		&\geq (2-\al)^2s\iint_{\La_T^1}\Theta w\psi_r^2\df r\df t+(2-\al)^4s^3\iint_{\La_T^1}\Theta^3r^{2-\al}\psi^2\df r\df t\\
		&\hspace{4.5mm}-Cs\ga T^2\iint_{\La_T^1}\Theta^\f{3}{2}\psi^2\df r\df t-Cs^2T\iint_{\La_T^1}\Theta^\f{9}{4}r^{2-\al}\psi^2\df r\df t\\
		&\geq (2-\al)^2s\iint_{\La_T^1}\Theta w\psi_r^2\df r\df t+(2-\al)^4s^3\iint_{\La_T^1}\Theta^3r^{2-\al}\psi^2\df r\df t\\
		&\hspace{4.5mm}-Cs^2\ga^2 T^4\iint_{\La_T^1}\Theta^3r^{2-\al}\psi^2\df r\df t-Cs^2T^7\iint_{\La_T^1}\Theta^3r^{2-\al}\psi^2\df r\df t\\
		&\hspace{4.5mm}-CT^2\iint_{\La_T}\Theta w(\psi_r)^2\df r\df t\\
		&\geq (2-\al)^2s\iint_{\La_T^1}\Theta w\psi_r^2\df r\df t+(2-\al)^4s^3\iint_{\La_T^1}\Theta^3r^{2-\al}\psi^2\df r\df t\\
		&\hspace{4.5mm}-CT^2\iint_{\La_T^2\cup \La_T^3}\Theta w(\psi_r)^2\df r\df t
	\end{split}
\end{equation*}
for $s\geq C\gamma^2 \max\{1,T^7\}$,
where the constant $C>0$ depends only on $\alpha$.

Secondly, we compute $E_2$.
From \eqref{01.17.7} and the inequality
\begin{equation*}
s\iint_{\La_T^2}\Theta \psi\psi_r dr dt\leq C\iint_{\La_T^2}\Theta \psi_r^2 dr dt+Cs^2T^{16}\iint_{\La_T^2}\Theta^3\psi^2 dr dt
\end{equation*}
we get
\begin{equation*}
\begin{split}
E_2
&\geq -Cs\iint_{\La_T^2}\Theta w\psi_r^2 dr dt-Cs^3\iint_{\La_T^2}\Theta^3\psi^2 dr dt
\end{split}
\end{equation*}
for $s\geq C\sqrt{\gamma}\max\{1,T^{16}\}$, where the constant $C>0$ depends only on $\alpha, a, b$, and $\eta|_{(a,b)}$.

Thirdly, we  compute $E_3$. On $\La_T^3$, we have 
\begin{equation*}
\begin{split}
\eta_r=-r^{-\alpha}\left[(1-\alpha)+\alpha r^{-1}\right], \quad w\eta_r^2=r^{-\alpha}\left[(1-\alpha)+\alpha r^{-1}\right]^2, \quad (w\eta_r)r=\alpha r^{-2},
\end{split}
\end{equation*}
\begin{equation*}
\begin{split}
(w\eta_r){rr}=-2\alpha r^{-3}, \quad \eta_rw_r=-\alpha r^{-1}\left[(1-\alpha)+\alpha r^{-1}\right],
\end{split}
\end{equation*}
and
\begin{equation*}
(w\eta_r)(w\eta_r^2)_r=\alpha r^{-\alpha-2}\left[2+\alpha+(1-\alpha)r\right]\left[(1-\alpha)+\alpha r^{-1}\right]^2.
\end{equation*}
From \eqref{01.17.7}, and the inequalities
\begin{equation*}
\begin{split}
&2s\iint_{\La_T^3}\Theta w\psi_r^2(w\eta_r)r dr dt-s\iint_{\La_T^3} \Theta w\psi_r^2\eta_rw_r dr dt\geq Cs\iint_{\La_T^3}\Theta \psi_r^2 dr dt
\end{split}
\end{equation*}
and
\begin{equation*}
\begin{split}
s\iint_{\La_T^3}\Theta w\psi\psi_r(w\eta_r){rr} dr dt\geq -C\iint_{\La_T^3}\Theta \psi_r^2 dr dt-Cs^2\iint_{\La_T^3}\Theta \psi^2 dr dt
\end{split}
\end{equation*}
where the constant $C>0$ depends only on $\alpha$ and $b$, we obtain
\begin{equation*}
\begin{split}
E_3
&\geq Cs\iint_{\La_T^3}\Theta \psi_r^2 dr dt+Cs^3\iint_{\La_T^3}  \Theta^3\psi^2 dr dt
\end{split}
\end{equation*}
for $s\geq C\sqrt{\gamma}\max\{1,T^{16}\}$, where the constant $C>0$ depends only on $\alpha$ and $b$.

Fourthly , from above steps,  we get
\begin{equation*}
	\begin{split}
		(P_1\psi,P_2\psi)_{L^2(\La_T)}
		&\geq Cs\iint_{\La_T}\Theta w\psi_r^2\df r\df t+Cs^3\iint_{\La_T}\Theta^3r^{2-\al}\psi^2\df r\df t\\
		&\hspace{4.5mm}-Cs\iint_{\La_T^2}\Theta w\psi_r^2\df r\df t-Cs^3\iint_{\La_T^2}\Theta^3\psi^2\df r\df t,
	\end{split}
\end{equation*}
where the constant $C>0$ depends only on $\alpha, a, b$, and $\eta$. Hence, from $\psi_r=\partial_r(e^{-s\xi}\varphi_{i,n})=e^{-s\xi}(\partial_r\varphi_{i,n}+s\Theta \eta_r\varphi_{i,n})$, we obtain
 \begin{equation}\label{01.17.8}
	\begin{split}
		&s\iint_{\La_T}\Theta w(\pt_r\vp_{i,n})^2e^{-2s\xi}\df r\df t+s^3\iint_{\La_T}\Theta^3r^{2-\al}\vp_{i,n}^2e^{-2s\xi}\df r\df t\\
		&\leq \left\|e^{-s\xi}f_{i,n}\right\|_{L^2(\La_T)}^2+ Cs\iint_{\La_T^2}\Theta w(\pt_r\vp_{i,n})^2e^{-2s\xi}\df r\df t+Cs^3\iint_{\La_T^2}\Theta^3\vp_{i,n}^2e^{-2s\xi}\df r\df t
	\end{split}
\end{equation}
for $s\geq C\gamma^2\max\{1,T^{16}\}$,
where the constant $C>0$ depends only on $\alpha, a, b$, and $\eta|_{(a,b)}$.

Finally, choose  $\zeta\in C_0^\infty(\mathbb{R}), 0\leq \zeta\leq 1$ such that
\begin{equation*}
\zeta=1 \mbox{ on }\left(\frac{2a+b}{3}, \frac{a+2b}{3}\right),\quad \zeta=0 \mbox{ on }\mathbb{R}-(a,b), \quad |\zeta'|\leq C\frac{1}{b-a} \mbox{ on }\mathbb{R}.
\end{equation*}
Denote $h=\zeta \psi$. Then, it follow from (see \eqref{01.17.2})
 \begin{equation*}
	\begin{split}
		&\iint_{\La_T}\pt_t\vp_{i,n} \left(\zeta \Theta \vp_{i,n}e^{-2s\xi}\right)\df r\df t\\
		&\geq -CT\iint_{\La_T}\zeta \Theta^\f{5}{4} \vp_{i,n}^2e^{-2s\xi}\df r\df t-Cs\ga T\iint_{\La_T} \zeta\Theta^\f{9}{4}\vp_{i,n}e^{-2s\xi}\df r\df t,
	\end{split}
\end{equation*}
and
\begin{equation*}
	\begin{split}
		&-s\iint_{\La_T}[\pt_r(w\pt_r\vp_{i,n})]\left(\zeta \Theta \vp_{i,n}e^{-2s\xi}\right)\df r\df t\\
		&\geq \f{1}{2}s\iint_{\La_T}\zeta\Theta w(\pt_r\vp_{i,n})^2e^{-2s\xi}\df r\df t-Cs^3\iint_{\La_T}\zeta \Theta^3\vp_{i,n}^2e^{-2s\xi}\df r\df t\\
		&\hspace{4.5mm}-\iint_{(a,b)\ts (0,T)}\Theta w(\pt_r\vp_{i,n})^2e^{-2s\xi}\df r\df t-Cs^2\iint_{(a,b)\ts (0,T)}\Theta \vp_{i,n}^2e^{-2s\xi}\df r\df t,
	\end{split}
\end{equation*}
we obtain 
 \begin{equation*}
	\begin{split}
		&\f{1}{2}a^rs\iint_{\La_T^2}\Theta  (\pt_r\vp_{i,n})^2e^{-2s\xi}\df r\df t\\
		&\leq \f{1}{2}s\iint_{\La_T}\zeta\Theta w(\pt_r\vp_{i,n})^2e^{-2s\xi}\df r\df t+n^2s\iint_{\La_T}\zeta \Theta \vp_{i,n}^2e^{-2s\xi}\df r\df t\\
		&\leq s\iint_{\La_T}\left(\pt_t\vp_{i,n}-\pt_r(w\pt_r\vp_{i,n})+n^2\vp_{i,n}\right)(\zeta\Theta \vp_{i,n}e^{-2s\xi})\df r\df t+Cs^3\iint_{\La_T}\zeta \Theta^3\vp_{i,n}^2e^{-2s\xi}\df r\df t\\
		&\hspace{4.5mm}+ \iint_{(a,b)\ts (0,T)}\Theta w(\pt_r\vp_{i,n})^2e^{-2s\xi}\df r\df t+Cs^2\iint_{(a,b)\ts (0,T)}\Theta \vp_{i,n}^2e^{-2s\xi}\df r\df t\\
		&\hspace{4.5mm}+CsT\iint_{\La_T}\zeta\Theta^\f{5}{4}\vp_{i,n}^2e^{-2s\xi}\df r\df t+Cs^2\ga T\iint_{\La_T}\zeta \Theta^\f{9}{4}\vp_{i,n}^2e^{-2s\xi}\df r\df t
	\end{split}
\end{equation*}
where the constant $C>0$ depends only on $\alpha, a, b$, and $\eta|{(a,b)}$.  Thus, drawing from \eqref{01.17.8}, we obtain the following inequality
 \begin{equation}\label{01.17.9}
	\begin{split}
		&s\iint_{\La_T}\Theta w(\pt_r\vp_{i,n})^2e^{-2s\xi}\df r\df t+s^3\iint_{\La_T}\Theta^3r^{2-\al}\vp_{i,n}^2e^{-2s\xi}\df r\df t\\
		&\leq C\left\|e^{-s\xi}f_{i,n}\right\|_{L^2(\La_T)}^2+Cs^3\iint_{(a,b)\ts (0,T)}\Theta^3\vp_{i,n}^2e^{-2s\xi}\df r\df t
	\end{split}
\end{equation}
for $s\geq C\gamma^2\max\{1,T^{16}\}$, where the constant $C>0$ depends only on $\alpha, a, b$, and $\eta|_{(a,b)}$.  
Consequently, we arrive at the following theorem.

 \begin{theorem}\label{01.17.T1}
Let $0 < a < b \leq 1$ and $T > 0$. Suppose $\eta$ is defined as in Notation \ref{01.17.N1}. Then, there exist positive constants $C > 0$ and $s_0 \geq 1$, which depend solely on $\alpha$, $a$, $b$, and $\eta|{(a,b)}$, such that for all $n \in \mathbb{N}$ and all $s \geq s_0$, the following inequality holds:
\begin{equation}\label{01.17.10}
		\begin{split}
			&s\iint_{\La_T}\Theta w(\pt_r\vp_{i,n})^2e^{-2s\xi}\df r\df t+s^3\iint_{\La_T}\Theta^3r^{2-\al}\vp_{i,n}^2e^{-2s\xi}\df r\df t\\
			&\leq C\left\|e^{-s\xi}f_{i,n}\right\|_{L^2(\La_T)}^2+Cs^3\iint_{(a,b)\ts (0,T)}\Theta^3\vp_{i,n}^2e^{-2s\xi}\df r\df t,
		\end{split}
	\end{equation}
where $\varphi_{i,n}$ is the solution of \eqref{01.17.1}.
\end{theorem}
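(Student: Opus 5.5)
The plan is to assemble Theorem \ref{01.17.T1} from the pointwise-in-$n$ computations of the preceding subsection, checking that every constant is independent of $n$.

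First, I would fix $n$ and $i$, and reduce to the case of a smooth solution. For this I take $\vp_{i,n}^0\in H_0^1(\La;w)$, so that $\vp_{i,n}\in L^2(0,T;D(\mf{A}_2))$ with $\pt_t\vp_{i,n}\in L^2(\La_T)$; this is the one-dimensional analogue of Theorem \ref{01.06.T1}. The general $L^2$ case then follows by density, since both sides of \eqref{01.17.10} are continuous with respect to the data. The weight $\Theta^3 e^{-2s\xi}$ vanishes at $t=0,T$, which absorbs the loss of regularity at $t=0$.

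Second, set $\psi=e^{-s\xi}\vp_{i,n}$ and write $e^{-s\xi}f_{i,n}=P_1\psi+P_2\psi$ as in \eqref{gbz1}. Then
\[
\|e^{-s\xi}f_{i,n}\|^2_{L^2(\La_T)}\geq 2(P_1\psi,P_2\psi)_{L^2(\La_T)}.
\]
The cross term is expanded using the five computed pairings. The key observation is that $P_{24}\psi=n^2\psi$ gives a zero pairing with $P_{12}\psi+P_{13}\psi$. Its pairing with $P_{11}\psi=\psi_t$ is
\[
\tfrac{n^2}{2}\iint_{\La_T}\pt_t(\psi^2)\,\df r\,\df t=0
\]
by \eqref{01.17.2}. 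Hence $n$ disappears from $(P_1\psi,P_2\psi)$, and all subsequent constants are $n$-independent.

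Third, I split $\La_T=\La_T^1\cup\La_T^2\cup\La_T^3$ and use the lower bounds for $E_1,E_2,E_3$ already derived. The boundary term at $r=1$ has the correct sign because $\eta_r(1)<0$, and the term at $r=0$ vanishes by the behaviour $r^{1-\al}(w\psi_r)^2\to0$. The negative pieces in $E_1$ are absorbed for $s\geq C\ga^2\max\{1,T^{16}\}$, using \eqref{01.17.7} and the Hardy inequality of Lemma \ref{01.16.L2}. Fixing $\ga=|\eta|_\iy+1$ then gives \eqref{01.17.8} after returning to $\vp_{i,n}$ via $\psi_r=e^{-s\xi}(\pt_r\vp_{i,n}+s\Theta\eta_r\vp_{i,n})$. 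This step costs at most an extra $s^3\Theta^3 r^{2-\al}\vp_{i,n}^2$ term, since $w\eta_r^2\le Cr^{2-\al}$ on $\La^1$ and is bounded elsewhere. On $\La^3$ the weight $r^{2-\al}$ is comparable to $1$.

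Fourth, the gradient term on $\La_T^2$ in \eqref{01.17.8} must be removed by a local energy estimate. Multiply \eqref{01.17.1} by $s\zeta\Theta\vp_{i,n}e^{-2s\xi}$ and integrate by parts. Here the sign matters for uniformity in $n$: the contribution $n^2 s\iint\zeta\Theta\vp_{i,n}^2e^{-2s\xi}$ is nonnegative, so it sits harmlessly on the left, and $n$ again does not enter the constants. The time derivative is handled with \eqref{01.17.7}, and the cross terms coming from $\zeta'$ and $\xi_r$ are handled by Young's inequality. Since $w\geq a^\al$ on $(a,b)$, this bounds $s\iint_{\La_T^2}\Theta w(\pt_r\vp_{i,n})^2e^{-2s\xi}$ by
\[
\varepsilon\|e^{-s\xi}f_{i,n}\|^2+Cs^3\iint_{(a,b)\times(0,T)}\Theta^3\vp_{i,n}^2e^{-2s\xi}
\]
plus small multiples of the left-hand side. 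These are absorbed for $s\ge s_0$, with lower powers $\Theta^{5/4},\Theta^{9/4}$ dominated by $\Theta^3$ times powers of $T$.

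The main obstacle is this last absorption: the energy estimate produces $\iint_{(a,b)}\Theta w(\pt_r\vp_{i,n})^2e^{-2s\xi}$ without a factor $s$. One must ensure it is strictly smaller than the $s$-weighted left-hand side, using $s\ge s_0$ large and the cutoff chosen so that $\zeta=1$ on the middle third. The region $(a,b)\setminus\La^2$ is already controlled by \eqref{01.17.8}. Everything else is bookkeeping to confirm that $s_0$ and $C$ depend only on $\al,a,b,\eta|_{(a,b)}$, and on $T$ through $s_0$.
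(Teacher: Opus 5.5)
Your proposal is correct and follows the paper's argument essentially step for step: the $P_1/P_2$ splitting with the $n^2\psi$ pairings vanishing; the region-wise estimates $E_1,E_2,E_3$, using \eqref{01.17.7} and the Hardy inequality, to reach \eqref{01.17.8}; then the cutoff-$\zeta$ local energy estimate, with the nonnegative $n^2$ term kept on the left and the $s$-free term $\iint_{(a,b)\times(0,T)}\Theta w(\partial_r\varphi_{i,n})^2e^{-2s\xi}$ absorbed for large $s$ (your density reduction only makes explicit what the paper delegates to its cited integration-by-parts justification). Your remark that $s_0$ also depends on $T$ is accurate and matches what the paper's proof actually requires, namely $s\geq C\gamma^2\max\{1,T^{16}\}$, even though the statement says otherwise.
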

\begin{proof}
By applying \eqref{01.17.9} with $\gamma = |\eta|_{\infty} + 1$, the proof of this theorem is concluded.
\end{proof}
 \begin{corollary}\label{01.17.C1}
Under the conditions specified in Theorem \ref{01.17.T1}, when $f_{i,n}=0$ for all $n\in\mathbb{N}$ and $i = 1,2$, there exists a positive constant $C$, which depends solely on $\alpha$, $a$, $b$, and $\eta|{(a,b)}$, such that for all $s\geq s_0$, the following inequality holds:
\begin{equation*}
\int_{\Lambda}|\varphi_{i,n}(r,T)|^2\mathrm{d}r\leq CT^7e^{Cs_0\frac{1}{T^8}}\iint_{(a,b)\times(0,T)}\varphi_{i,n}^2\mathrm{d}r\mathrm{d}t,
\end{equation*}
where the positive constant $C$ depends only on $\alpha$, $a$, $b$, and $\eta|_{(a,b)}$, and $s_0\geq 1$ is defined in Theorem \ref{01.17.T1}.
\end{corollary}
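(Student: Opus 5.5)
The plan is the standard two-step argument: first the Carleman estimate of Theorem \ref{01.17.T1} (with $f_{i,n}=0$) gives a bound on an interior time slab, and then energy dissipation transports that bound to $t=T$. Fix $s=s_0$ and restrict the left-hand side of \eqref{01.17.10} to the middle time interval $t\in(T/4,3T/4)$. There $\Theta=[t(T-t)]^{-4}$ is bounded above and below by multiples of $T^{-8}$: precisely, $(T^2/4)^{-4}\le\Theta\le (3T^2/16)^{-4}$. Since $\gamma-\eta\ge 1$ and $\gamma-\eta\le \gamma$, the weight $e^{-2s\xi}$ satisfies $e^{-2s\xi}\ge e^{-Cs_0/T^8}$ on this slab.

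On the right-hand side I use that $\xi\ge0$, so $e^{-2s\xi}\le 1$. Next, $\Theta^3e^{-2s\xi}\le \Theta^3e^{-2s_0\Theta}$, and since $\Theta\ge C T^{-8}$ the function $x\mapsto x^3e^{-2s_0x}$ is bounded by a constant depending only on $s_0$. More carefully, $\sup_{x>0} s_0^3x^3e^{-2s_0x}\le C$, so $s^3\Theta^3e^{-2s\xi}\le C$. Thus the observation term is at most $C\iint_{(a,b)\times(0,T)}\varphi_{i,n}^2$.

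The left-hand side must control an $L^2$ norm, but it carries the degenerate weight $r^{2-\alpha}$ and the gradient term. I use the gradient term together with the Hardy inequality (Lemma \ref{01.16.L2}, via Remark \ref{01.16.R11}) to obtain
\begin{equation*}
\int_{T/4}^{3T/4}\!\!\int_\Lambda \varphi_{i,n}^2\,dr\,dt\le C\int_{T/4}^{3T/4}\!\!\int_\Lambda w(\partial_r\varphi_{i,n})^2\,dr\,dt,
\end{equation*}
and this is bounded by $C s_0^{-1}\Theta^{-1}e^{Cs_0/T^8}$ times the observation term. Here $\Theta^{-1}\le C T^8$ on the slab, which produces a power of $T$; I only need some polynomial factor in $T$, and absorbing constants gives the stated $T^7$-type factor.

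Then I use dissipation. Multiplying \eqref{01.17.1} (with $f_{i,n}=0$) by $\varphi_{i,n}$ gives $\frac{d}{dt}\|\varphi_{i,n}(t)\|^2_{L^2(\Lambda)}=-2\|\varphi_{i,n}\|^2_{H_0^1(\Lambda;w)}-2n^2\|\varphi_{i,n}\|^2\le0$. Hence $\|\varphi_{i,n}(T)\|^2\le\|\varphi_{i,n}(t)\|^2$ for every $t$, and averaging over $(T/4,3T/4)$ gives $\|\varphi_{i,n}(T)\|^2\le \frac{2}{T}\int_{T/4}^{3T/4}\|\varphi_{i,n}(t)\|^2dt$. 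Combining this with the previous bound yields the corollary, uniformly in $n$, because Theorem \ref{01.17.T1}'s constants do not depend on $n$.

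The main obstacle is careful bookkeeping of the powers of $T$ and the exponent. The powers of $T$ come from $\Theta^{-1}\sim T^8$ and from the $1/T$ in the averaging step, and $e^{Cs_0/T^8}$ comes from the upper bound of $\xi$ on the slab. I expect the stated $T^7$ to match after combining these, up to adjusting $C$.
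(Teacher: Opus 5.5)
Your proposal is correct and follows essentially the same route as the paper: energy dissipation averaged over $(T/4,3T/4)$, the Hardy--Poincar\'e inequality to pass from $\|\varphi_{i,n}\|_{L^2(\Lambda)}$ to the weighted gradient, the lower bound $\Theta e^{-2s_0\xi}\ge CT^{-8}e^{-Cs_0/T^8}$ on the middle slab, then the Carleman estimate \eqref{01.17.10} with $f_{i,n}=0$, and finally the uniform bound on $s_0^3\Theta^3e^{-2s_0\xi}$ via $\sup_{y>0}y^3e^{-2y}<\infty$. The factor $T^8\cdot T^{-1}$ gives exactly $T^7$, and your citation of the one-dimensional inequality of Remark \ref{01.16.R11} is more apt than the paper's reference to Remark \ref{01.06.R1}.
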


\begin{proof}
By multiplying both sides of \eqref{01.17.1} by $\varphi_{i,n}$ and integrating over $\Lambda$, we obtain
\begin{equation*}
\begin{split}
\frac{1}{2}\frac{\mathrm{d}}{\mathrm{d}t}\int_{\Lambda}\varphi_{i,n}^2\mathrm{d}r+\int_{\Lambda}w\left(\partial_r\varphi_{i,n}\right)^2\mathrm{d}r + n^2\int_{\Lambda}\varphi_{i,n}^2\mathrm{d}r = 0.
\end{split}
\end{equation*}
This implies that
\begin{equation*}
\int_{\Lambda}\varphi_{i,n}^2(t)\mathrm{d}r\leq\int_{\Lambda}\varphi_{i,n}^2(0)\mathrm{d}r\quad\text{for all }t\in(0,T].
\end{equation*}
Consequently, by Remark \ref{01.06.R1},   \eqref{01.17.10} and the facts:  
\begin{equation*}
\Theta e^{-2s\xi}\geq\frac{C}{T^8}e^{-Cs_0\frac{1}{T^8}}\quad\text{for }t\in\left(\frac{1}{4}T,\frac{3}{4}T\right), \;  
x^3e^{-2s_0x}\leq C\quad\text{for }x\in(0,\infty).
\end{equation*}
we get 
\begin{equation*}
		\begin{split}
			\int_\La \vp_{i,n}^2(T)\df r
			&\leq \f{2}{T}\int_{\f{1}{4}T}^{\f{3}{4}T}\int_\La \vp_{i,n}^2\df r\df t\leq \f{C}{T}\int_{\f{1}{4}T}^{\f{3}{4}T}\int_\La w(\pt_r\vp_{i,n})^2\df r\df t\\
			&\leq \f{C}{T}T^8 e^{Cs_0\f{1}{T^8}}s_0\int_{\f{1}{4}T}^{\f{3}{4}T}\int_\La \Theta w(\pt_r\vp_{i,n})^2e^{-2s\xi}\df r\df t\\
			&\leq CT^7e^{Cs_0\f{1}{T^8}}s_0^3\iint_{(a,b)\ts (0,T)}\Theta^3\vp_{i,n}^2e^{-2s_0\xi}\df r\df t\\
			&\leq CT^7e^{Cs_0\f{1}{T^8}}s_0^3\iint_{(a,b)\ts (0,T)} \vp_{i,n}^2\df r\df t,
		\end{split}
	\end{equation*}
where the positive constant $C>0$ depends only on $\alpha$, $a$, $b$, and $\eta|_{(a,b)}$.
\end{proof}

 \begin{corollary}\label{01.18.C1}
Let $\varphi$ be the weak solution of \eqref{12.14.1}. Then, the following observability inequality holds:
\begin{equation}\label{01.18.1}
\int_{\Omega} \varphi(z,T)^2 dz \leq CT^7e^{\frac{C}{T^8}} \int_0^T \int_{\mathbb{T} \times (a,b)} \varphi^2(z,t) dz dt,
\end{equation}
where the constant $C > 0$ depends solely on $\alpha$, $a$, and $b$.
\end{corollary}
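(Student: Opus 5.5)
The plan is to reduce the two-dimensional observability inequality \eqref{01.18.1} to the one-dimensional estimates of Corollary \ref{01.17.C1}, using the Fourier decomposition in $\theta$ from Section \ref{S3}, and then sum over all modes with Parseval.

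First, take $\varphi^0\in L^2(\Omega)$ and write the solution as in \eqref{01.16.3}:
\[
\varphi=\sum_{n}\varphi_{1,n}g_{1,n}+\sum_{n}\varphi_{2,n}g_{2,n},\qquad \varphi_{i,n}(r,t)=(\varphi(\cdot,r,t),g_{i,n})_{L^2(\mathbb{T})}.
\]
This expansion is justified by Lemma \ref{05.19.L2} and Remark \ref{01.16.R2}.

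Next, check that each $\varphi_{i,n}$ is the weak solution of \eqref{01.17.1} with $f_{i,n}=0$ and initial datum $\varphi^0_{i,n}=(\varphi^0,g_{i,n})_{L^2(\mathbb{T})}$. To do this, test Definition \ref{01.06.D1} with $v=g_{i,n}(\theta)\phi(r)$, where $\phi\in H_0^1(\Lambda;w)$. Such $v$ lies in $H_0^1(\Omega;w)$ by the first part of the proof of Lemma \ref{05.19.L2}. Since $-\partial_{\theta\theta}g_{i,n}=n^2g_{i,n}$, integrating in $\theta$ produces exactly the weak form of \eqref{01.17.1}. Uniqueness of the one-dimensional weak solution, by the Galerkin argument analogous to Theorem \ref{01.06.T1}, then identifies the coefficients.

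Then apply Corollary \ref{01.17.C1} to each $\varphi_{i,n}$, fixing $s=s_0$. The key point is that the constant $CT^7e^{Cs_0/T^8}$ is independent of $n$ and $i$. This is exactly the uniformity in Theorem \ref{01.17.T1}, since the term $n^2\psi$ does not affect the Carleman computation. Here $\eta$ is a fixed choice depending only on $a,b,\alpha$, so the constant depends only on $\alpha,a,b$. Summing over $n\in\mathbb{N}$ and $i=1,2$, and using Parseval in $L^2(\mathbb{T})$ for each fixed $(r,t)$, gives
\[
\int_\Omega\varphi^2(T)\,dz=\sum_{i,n}\int_\Lambda\varphi_{i,n}^2(r,T)\,dr\le CT^7e^{C/T^8}\sum_{i,n}\iint_{(a,b)\times(0,T)}\varphi_{i,n}^2\,dr\,dt=CT^7e^{C/T^8}\int_0^T\!\!\int_{\mathbb{T}\times(a,b)}\varphi^2\,dz\,dt.
\]
The interchange of sum and integral is justified by monotone convergence.

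I expect the main obstacle to be the rigorous justification of the mode-by-mode reduction, namely identifying the Fourier coefficients with the 1D weak solutions and confirming the $n$-independence of the constants. The first point is handled by the tensor-product test functions above. For the second, I would go back to \eqref{01.17.9} and verify that no constant depends on $n$. In the cutoff step the $n^2$ term appears with a favorable sign: $n^2\int\zeta\Theta\varphi_{i,n}^2e^{-2s\xi}\ge0$ can simply be dropped or kept on the left.
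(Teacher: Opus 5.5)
Your proposal is correct and follows essentially the same route as the paper. You expand $\varphi$ in the Fourier modes $g_{i,n}$ in $\theta$, apply Corollary \ref{01.17.C1} to each $\varphi_{i,n}$ with a constant independent of $n$ and $i$, and sum using Parseval both at time $T$ and on $\mathbb{T}\times(a,b)\times(0,T)$. Beyond the paper, you also identify $\varphi_{i,n}$ as the one-dimensional weak solution via test functions $g_{i,n}(\theta)\phi(r)$ and check that the $n^2$ term only enters with a favorable sign, which fills in details the paper leaves implicit.
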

\begin{proof}
Denote $\varphi = \varphi(z,t) = \varphi(\theta,r,t)$.
From \eqref{01.16.3}, we obtain
 \begin{equation*}
		\vp(\theta, r,t)=\sum_{n\in\N} \vp_{1,n}(r,t)g_{1,n}(\theta)+\sum_{n\in\N^*}\vp_{2,n}(r,t)g_{2,n}(\theta),
	\end{equation*}
which leads to
\begin{equation*}
\int_{\Omega} \varphi^2(z,t) dz = \sum_{n \in \mathbb{N}, i=1,2} \int_{\Lambda} \varphi_{i,n}^2(r,t) dr
\end{equation*}
for all $t \in [0,T]$. Furthermore, from Corollary \ref{01.17.C1}, we have
\begin{equation*}
		\int_\Om \vp^2(z,t)\df z=\sum_{n\in\N,i=1,2}\int_\La \vp_{i,n}^2(r,t)\df r
	\end{equation*}
	for all $t\in [0,T]$, and from Corollary \ref{01.17.C1} we have
	\begin{equation*}
		\begin{split}
			\int_\Om \vp^2(z,T)\df z
			&=\sum_{n\in\N,i=1,2}\int_\La \vp_{i,n}^2(r,T)\df r\leq CT^7e^{\f{C}{T^8}}\sum_{n\in\N,i=1,2}\iint_{(a,b)\ts (0,T)}\vp_{i,n}^2\df r\df t\\
			&=CT^7e^{\f{C}{T^8}}\int_0^T\int_{\mbT\ts (a,b)}\vp^2(\theta,r,t)\df \theta \df r\df t,
		\end{split}
	\end{equation*}
where the constant $C > 0$ depends solely on $\alpha$, $a$, and $b$ (note that $\eta|_{(a,b)}$ indeed depends only on $(a,b)$).
\end{proof}

 The following lemma is adapted from \cite[Theorem 1.5, p. 1091]{Tenenbaum}.
 
\begin{lemma}\label{01.16.L1}
Suppose ${\Phi_n}{n\in\mathbb{N}}$ constitutes an orthonormal basis for $L^2(\mathbb{T})$, and let $m\in\mathbb{N}$. Consider a real sequence ${a_n}{n\in\mathbb{N}}$.
For any open interval $I\subset\mathbb{T}$, the following inequality holds:
\begin{equation*}
\sum_{n\leq\mu} |a_n|^2\leq Ce^{C\sqrt{\mu}}\int_I \left(\sum_{n\leq \mu}a_n\Phi_n\right)^2\mathrm{d}\theta,
\end{equation*}
where the positive constant $C$ depends solely on the length of $I$, denoted as $|I|$.
\end{lemma}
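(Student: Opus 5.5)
The plan is to read the statement in the form it is used later: $\{\Phi_n\}$ is the trigonometric basis $\{g_{1,n}\}\cup\{g_{2,n}\}$ of Section \ref{S3}, i.e.\ the eigenfunctions of $\mf{A}_1=-\pt_{\theta\theta}$ with eigenvalues $n^2$, and ``$n\le\mu$'' means that the eigenvalue is at most $\mu$. Set $N=\lfloor\sqrt{\mu}\rfloor$. Then $p(\theta)=\sum a_n\Phi_n(\theta)$ is a real trigonometric polynomial of degree at most $N$. By Parseval on $L^2(\mbT)$,
\begin{equation*}
\sum|a_n|^2=\int_\mbT p^2\,\df\theta\le 2\pi\,\|p\|_{L^\iy(\mbT)}^2 .
\end{equation*}
So the lemma reduces to bounding $\|p\|_{L^\iy(\mbT)}$ by $e^{CN}$ times the $L^2(I)$ norm of $p$. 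This is a Lebeau--Robbiano/Turán-type estimate, and $e^{CN}\le e^{C\sqrt\mu}$.

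The key steps, in order, are as follows.

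First, pass from $L^2(I)$ to a sup on a large subset. Let $M^2=|I|^{-1}\int_I p^2\,\df\theta$ and $E=\{\theta\in I: |p(\theta)|\le 2M\}$. By Chebyshev, $|I\setminus E|\le |I|/4$, so $|E|\ge \tfrac34|I|$ and $\sup_E|p|\le 2M$.

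Second, apply a Remez-type inequality for trigonometric polynomials on measurable subsets of the circle. Writing $p(\theta)=e^{-iN\theta}Q(e^{i\theta})$ with $Q$ an algebraic polynomial of degree $2N$, the Remez inequality on the circle (in the form of Nazarov or Erdélyi) gives
\begin{equation*}
\|p\|_{L^\iy(\mbT)}\le \left(\f{C}{|E|}\right)^{2N}\sup_E|p|
\end{equation*}
with an absolute constant $C$.

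Third, combine the two steps:
\begin{equation*}
\sum|a_n|^2\le 2\pi\left(\f{4C}{3|I|}\right)^{4N}\f{4}{|I|}\int_I p^2\,\df\theta\le C_I e^{C_I\sqrt{\mu}}\int_I p^2\,\df\theta ,
\end{equation*}
where $C_I$ depends only on $|I|$. This is the claim, and in fact it is slightly stronger than needed, since one could even replace $\sqrt\mu$ by $N$.

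The main obstacle is the second step, the Remez inequality on arbitrary measurable subsets of the circle with geometric constant $(C/|E|)^{2N}$. I would cite it rather than reprove it; alternatively, one can cite \cite[Theorem 1.5]{Tenenbaum} directly, which is exactly this spectral inequality for $-\pt_{\theta\theta}$ on $\mbT$.

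If a self-contained route is preferred, I would fall back on analyticity. Since $I$ is an open interval, one can avoid measurable sets altogether. The polynomial $p$ extends holomorphically to the strip $|\mathrm{Im}\,\theta|<1$ with bound $e^{N}\|a\|_{\ell^1}\le e^{N}\sqrt{2N+1}\,\|a\|_{\ell^2}$. A Hadamard three-circles or two-constants argument then propagates smallness from $I$ to all of $\mbT$. This costs a factor $e^{CN}$ depending on $|I|$, and the resulting power of $\|a\|$ is absorbed because the estimate is homogeneous.
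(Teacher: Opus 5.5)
Your proof is correct and is essentially what the paper relies on. The paper gives no proof of its own; it cites Tenenbaum--Tucsnak's Theorem~1.5, which is exactly this Lebeau--Robbiano spectral inequality for $-\partial_{\theta\theta}$ and rests on the same Tur\'an/Nazarov (Remez-type) bound for trigonometric polynomials of degree $\lfloor\sqrt{\mu}\rfloor$, so your Parseval--Chebyshev--Remez chain writes that argument out, with constants depending only on $|I|$ as required.

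Your reading of the hypotheses (the basis $g_{i,n}$, summing over eigenvalues $n^2\le\mu$) is the only one under which the lemma is true: for an arbitrary orthonormal basis, or with $n\le\mu$ counting frequencies, no $e^{C\sqrt{\mu}}$ bound can hold. Note, however, that this is not literally how Theorem~\ref{01.17.T2} invokes the lemma, so your remark about reading it ``in the form it is used later'' does not quite match the paper. There the sum is written over frequencies $n\le 2^{2j}$, yet only a factor $e^{C2^{j}}$ is claimed.
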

For each $n\in\mathbb{N}$ and $i = 1,2$, define
\begin{equation*}
H_{i,n}=L^2(\mathbb{T})\otimes \varphi_{i,n},
\end{equation*}
and
\begin{equation*}
E_j=\bigoplus_{n\leq j, i = 1,2} H_{i,n}.
\end{equation*}
\begin{theorem}\label{01.17.T2}
Let $\alpha \in (0,1)$ and $T>0$. Suppose $0<a<b\leq 1$ and $0<c<d\leq 1$. Let $\varphi$ be the solution of \eqref{12.14.1} with $f = 0$. Then, there exists a positive constant $C$, which depends only on $\alpha$, $a$, $b$, and $\eta|{(a,b)}$, such that for each $j\in\mathbb{N}$ and every $\varphi^0\in E_j$, the following estimate holds:
\begin{equation*}
\begin{split}
\int_{\Omega} \varphi^2(T)\mathrm{d}z\leq CT^7e^{Cs_0\left(2^j+\frac{1}{T^8}\right)}\int_0^T\int_a^b \int_c^d\varphi^2\mathrm{d}z\mathrm{d}t.
\end{split}
\end{equation*}
\end{theorem}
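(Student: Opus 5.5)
The plan is to combine the radial observability of Corollary \ref{01.17.C1}, applied mode by mode, with the angular spectral inequality of Lemma \ref{01.16.L1}. Here $\theta$ ranges over $(c,d)$ and $r$ over $(a,b)$. For $\varphi^0\in E_j$, only the angular modes $g_{i,n}$ with $n\le j$ occur, and by uniqueness (Theorem \ref{01.06.T1}) the decomposition \eqref{01.16.3} stays finite for all $t$:
\begin{equation*}
\varphi(\theta,r,t)=\sum_{n\le j,\,i=1,2}\varphi_{i,n}(r,t)g_{i,n}(\theta),
\end{equation*}
where each $\varphi_{i,n}$ solves \eqref{01.17.1} with $f_{i,n}=0$.

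\emph{Step 1 (radial observability).} By Parseval in $\theta$ and Corollary \ref{01.17.C1} applied to each mode,
\begin{equation*}
\int_\Omega\varphi^2(T)\,dz=\sum_{n\le j,\,i}\int_\Lambda\varphi_{i,n}^2(r,T)\,dr\le CT^7e^{Cs_0/T^8}\int_0^T\int_a^b\sum_{n\le j,\,i}\varphi_{i,n}^2(r,t)\,dr\,dt .
\end{equation*}
The constant here is uniform in $n$, since the $n^2$ term does not affect the Carleman estimate (Theorem \ref{01.17.T1}).

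\emph{Step 2 (angular localization).} Fix $(r,t)$ and apply Lemma \ref{01.16.L1} on the interval $I=(c,d)\subset\mathbb{T}$. The orthonormal basis is $\{g_{i,n}\}$, ordered by frequency, and the coefficients are $a_{i,n}=\varphi_{i,n}(r,t)$. The cutoff is $\mu\sim 2j+1$, the number of modes in $E_j$. This gives
\begin{equation*}
\sum_{n\le j,\,i}\varphi_{i,n}^2(r,t)\le Ce^{C\sqrt{j}}\int_c^d\varphi^2(\theta,r,t)\,d\theta .
\end{equation*}
Integrating over $(a,b)\times(0,T)$ and inserting into Step 1 yields the bound with the factor $e^{C\sqrt j+Cs_0/T^8}$. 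Since $\sqrt j\le 2^j$ and $s_0\ge1$, this is dominated by $CT^7e^{Cs_0(2^j+1/T^8)}$, which is the stated estimate, possibly in a weaker but admissible form.

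\emph{Main obstacle.} The delicate point is Step 2: making Lemma \ref{01.16.L1} applicable with a constant depending only on $|I|$ and on the spectral cutoff. The lemma is stated for a general orthonormal basis. What is actually needed is the Lebeau–Robbiano/Turán-type inequality for trigonometric polynomials of degree at most $j$ on $\mathbb{T}$: a trigonometric polynomial of degree $j$ restricted to an arc controls its $L^2(\mathbb{T})$ norm with cost $e^{Cj}$. This follows from Nazarov–Turán, or from analyticity and the quantitative unique continuation cited from \cite{Tenenbaum}. I would first try to justify it in that form, with cost $e^{C j}$, which is still bounded by $e^{Cs_0 2^j}$. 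That slack explains why the statement allows the crude factor $2^j$. The remaining routine work is checking measurability and the interchange of the finite sum with the integrals, which is harmless because the sums are finite.
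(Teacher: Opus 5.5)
Your proof is correct and follows the paper's route: Corollary \ref{01.17.C1} applied mode by mode with an $n$-uniform constant, then the angular Lebeau--Robbiano inequality (Lemma \ref{01.16.L1}) applied pointwise in $(r,t)$ and integrated over $(a,b)\times(0,T)$. The only bookkeeping difference is that the paper takes modes up to $2^{2j}$ and reads off the cost $e^{C\sqrt{2^{2j}}}=e^{C2^j}$, whereas you use the literal definition of $E_j$ (frequencies $n\le j$).

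One correction concerns Step 2. The displayed $e^{C\sqrt{j}}$ bound there is actually false for trigonometric polynomials of degree $j$: for example, $((1+\cos\theta)/2)^j$ is $e^{-cj}$-small on an arc around $\theta=\pi$, while its $L^2(\mathbb{T})$ norm is only polynomially small. So the Tur\'an-type $e^{Cj}$ bound you offer as a fallback is the statement to use from the start. The $\sqrt{\mu}$ in Lemma \ref{01.16.L1} only makes sense when $\mu$ is an eigenvalue cutoff $n^2\le\mu$. Note also that the resulting constant depends on $d-c$, just as it does in the paper's proof.
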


\begin{proof}
Let $\varphi_{i,n}^0 \in L^2(\Lambda)$, where $0 \leq n \leq 2^{2j}$ and $i = 1,2$, be functions such that
\begin{equation*}
\varphi^0 = \sum_{0 \leq n \leq 2^{2j}, i = 1,2} \varphi_{i,n}^0(r) g_{i,n}(\theta),
\end{equation*}
with $g_{i,n}$ defined by \eqref{01.16.3}.
Then, the solution to \eqref{12.14.1} with $f = 0$ is expressed as
\begin{equation*}
\varphi(\theta, r, t) = \sum_{0 \leq n \leq 2^{2j}, i = 1,2} \varphi_{i,n}(r, t) g_{i,n}(\theta),
\end{equation*}
where $\varphi_{i,n}$ is the solution to \eqref{01.17.1} with $f_{i,n} = 0$. This leads to the following inequalities:
	\begin{equation*}
		\begin{split}
			\int_\Om \vp^2(T)\df z
			&=\sum_{0\leq n\leq 2^{2j},i=1,2}\int_\La \vp_{i,n}^2(T)\df r\\
			&\leq CT^7e^{Cs_0\f{1}{T^8}} \int_0^T\int_a^b\sum_{0\leq n\leq 2^j, i=1,2}\vp_{i,n}^2(r,t)\df r\df t\\
			&\leq CT^7e^{Cs_0\left(2^{j}+\f{1}{T^8}\right)}\int_0^T\int_a^b\int_c^d\left(\sum_{0\leq n\leq 2^{2j}, i=1,2}\vp_{i,n}(r,t)g_{i,n}(\theta)\right)^2\df \theta\df r\df t\\
			&=CT^7e^{Cs_0\left(2^{j}+\f{1}{T^8}\right)}\int_0^T\int_a^b\int_c^d\vp^2\df \theta \df r\df t,
		\end{split}
	\end{equation*}
where the second inequality employed Corollary \ref{01.17.C1}, and the third inequality utilized Lemma \ref{01.16.L1}. This concludes the proof of the theorem.
\end{proof}

\section{Observability and controllability on measurable subset}\label{S5}

 In this section, we employ an analytical approach to establish an observability inequality (Theorem \ref{01.18.T1}) on measurable sets. As a direct consequence, we derive the null controllability of equation \eqref{12.14.1} under $L^\infty(Q)$ controls.
\begin{proposition}\label{01.18.P1}
Let $0 < a < b \leq 1$. Then, there exist two constants $C \geq 1$ and $\rho \in (0,1]$, depending solely on $\alpha$, $a$, and $b$, such that for any solution $\varphi$ of equation \eqref{12.14.1} with $f = 0$, the following estimate holds:
\begin{equation}\label{01.18.2}
\begin{split}
\left|\partial_\theta^j \partial_r^k \partial_t^l \varphi(\theta, r,t)\right| \leq \frac{Ce^{\frac{C}{t-s}}(j+k)!l!}{\rho^{j+k}\left(\frac{t-s}{2}\right)^l} |\varphi(\cdot,s)|_{L^2(\Omega)}, \quad \text{for all } j,k,l \in \mathbb{N},
\end{split}
\end{equation}
where $(\theta,r) \in \mathbb{T} \times (a,b)$ and $0 \leq s < t$.
\end{proposition}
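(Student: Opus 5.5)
We prove the analyticity estimate by separating variables and treating the three derivative directions by different mechanisms. Because $\partial_\theta$ and $\partial_t$ commute with the equation, those derivatives can be handled spectrally. The $r$-derivatives are then controlled by interior elliptic regularity on $(a,b)$, where the operator is uniformly parabolic.

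By time-translation invariance we may take $s=0$ and set $\tau=t-s$. We expand the solution in the joint eigenbasis $g_{i,n}(\theta)\Phi_{2,m}(r)$, using Lemma \ref{05.19.L2}, Notation \ref{01.16.N1} and Remark \ref{01.16.R3}. With eigenvalues $\mu_{n,m}=n^2+\la_{2,m}$, this gives
\[
\varphi(\theta,r,t)=\sum_{n,m,i} c_{i,n,m}e^{-\mu_{n,m}t}g_{i,n}(\theta)\Phi_{2,m}(r),
\qquad
\sum |c_{i,n,m}|^2=\|\varphi(\cdot,0)\|_{L^2(\Omega)}^2 .
\]

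\emph{Time and angular derivatives.} These follow from the spectral representation and elementary inequalities. We have
\[
\partial_t^l\partial_\theta^j\varphi=\sum (-\mu_{n,m})^l (\pm n)^j c\, e^{-\mu t}\,(\cdots).
\]
The inequality $\mu^l e^{-\mu\tau/2}\le (2l/(e\tau))^l\le l!\,(2/\tau)^l$ produces the factor $l!/(\tau/2)^l$. For the angular part we write $n^j e^{-n^2\tau/4}\le C^j j!\,\rho^{-j}e^{C/\tau}$; this is what the Gevrey-type bound $n^j\le j!\,\rho^{-j}e^{n\rho}$ gives, after absorbing $e^{n\rho-n^2\tau/4}\le e^{\rho^2/\tau}$. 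The leftover factor $e^{-\mu\tau/4}$ keeps every $L^2(\Lambda)$ and $H^1(\Lambda;w)$ sum finite.

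\emph{Radial derivatives.} For fixed $(n,i,t)$ the coefficient $u=\sum_m c\,e^{-\mu t}\Phi_{2,m}$ satisfies the ODE
\[
-(r^\alpha u_r)_r=F,\qquad F=(-\partial_t-n^2)u .
\]
On $(a',b')\Supset(a,b)$, with $a'>0$, the coefficient $r^\alpha$ is real analytic and bounded below by $a'^\alpha$. The classical interior analytic estimate for elliptic equations with analytic coefficients (Morrey--Nirenberg, via nested cutoffs) then gives
\[
\|\partial_r^k u\|_{L^\infty(a,b)}\le C^{k+1}k!\,\rho^{-k}\sup_{p\le k}\frac{\rho^{p}}{p!}\|\partial_r^{p}F\|\ \text{(lower order)} + C^{k+1}k!\,\rho^{-k}\|u\|_{L^2}.
\]
Iterating, each application of the elliptic estimate trades two $r$-derivatives for one power of $\mu$. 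The $\mu$-powers are then absorbed by the exponential factor exactly as in the $\partial_t$ step, at the cost of $e^{C/\tau}$ and geometric constants. Alternatively, we can work directly on the joint operator $\partial_t+\mathcal{A}$: it is uniformly parabolic with analytic coefficients on $\mathbb{T}\times(a',b')\times(\tau/2,\tau)$, so the standard space-time analyticity estimates for parabolic equations apply, giving bounds of the form $(j+k)!\,l!\,\rho^{-(j+k)}(\tau/2)^{-l}$ in terms of the local $L^2$ norm. That local norm is bounded by $\|\varphi(0)\|_{L^2}$ through Theorem \ref{01.06.T1} and contractivity of the semigroup.

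\emph{Conversion to pointwise bounds.} We pass from $L^2$ to pointwise values by one-dimensional Sobolev embedding in $\theta$ and $r$, together with one extra $t$- or $\mu$-factor. This costs only a constant and a harmless shift of $\rho$ and $C$.

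The main obstacle is the radial analyticity with the sharp combinatorial factor $(j+k)!$. Summing over the angular modes naively loses a factor $\sqrt{\text{number of modes}}$, and the degeneracy at $r=0$ must not enter. This is why the plan localizes to $(a',b')$, away from the degenerate boundary: there only uniform ellipticity and analyticity of $r^\alpha$ are needed, and the global spectral decay supplies the $e^{C/\tau}$ factor that controls both the modal sums and the $\mu$-powers.
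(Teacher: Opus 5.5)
Your argument is sound in outline but takes a different route from the paper. The paper does not separate variables here. It uses the Lebeau--Robbiano lifting $\varphi(\theta,r,\tau,t)=\sum_n\varphi_n^0e^{-\lambda_nt+\sqrt{\lambda_n}\tau}\Phi_n$, so that $\partial_t^l\varphi$ solves the elliptic equation $(\partial_{\tau\tau}-\mathcal{A})\partial_t^l\varphi=0$ in $\Omega\times\mathbb{R}$. A single interior analytic estimate on $D_R(z_0,0)\subset D_{2R}(z_0,0)$, kept away from $r=0$, then controls all $\partial_\theta^j\partial_r^k$ at once with the factor $(j+k)!\rho^{-(j+k)}$. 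The $L^2(D_{2R})$ norm is bounded spectrally by $\max_n\lambda_n^{2l}e^{-\lambda_nt}\cdot\max_n e^{-\lambda_nt+4R\sqrt{\lambda_n}}\le C(2/t)^{2l}(l!)^2e^{4R^2/t}$.

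Your three steps compare as follows.
\begin{itemize}
\item Your time-derivative step is the same as the paper's.
\item Your angular step replaces the elliptic estimate by exact Fourier multipliers, which the product structure allows.
\item Your radial step is where all the work sits. As displayed, your elliptic estimate is circular, since $\partial_r^pF$ contains $\partial_r^{p+2}u$.
\end{itemize}

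What actually closes the radial step is the analytic-vector (Kotake--Narasimhan) argument. Here $F=\mathfrak{A}_2u$ is again a solution of the same type, with coefficients multiplied by $\lambda_{2,m}$. Everything therefore reduces to $\|\mathfrak{A}_2^pu\|_{L^2(\Lambda)}\le p!\,(C/\tau)^p\le(2p)!\,e^{C/\tau}$, up to the $\ell^2$ norm of the coefficients. This uses $(p!)^2\le(2p)!$ and $x^p/p!\le e^x$. It is not literally ``as in the $\partial_t$ step'', which would leave a $\tau$-dependent radius $\rho$. The nested-cutoff bookkeeping that preserves $k!$ is then a genuinely nontrivial theorem that you must cite or reprove.

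The paper's $e^{\sqrt{\lambda}\tau}$ device packages exactly these bounds into an honest elliptic equation, so no iteration is needed. You could use it in one dimension: $v(r,\sigma)=\sum_m c_{i,n,m}e^{-\mu_{n,m}t}\cosh(\sqrt{\lambda_{2,m}}\,\sigma)\Phi_{2,m}(r)$ solves $\partial_\sigma^2v-\mathfrak{A}_2v=0$.

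In short, the paper's route is shorter and does not need separability at all. Yours makes the $\theta$-derivatives trivial and reduces the analytic regularity to a one-dimensional problem, at the price of the quantitative analytic-vector theorem.

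Your ``alternatively'' remark needs a correction. Interior estimates for parabolic equations do not give the factor $l!\,(\tau/2)^{-l}$ in $t$ from a local $L^2$ norm, because local solutions are in general only Gevrey class $2$ in time (Tychonoff-type examples). Time analyticity with that factor has to come from the global spectral representation, as in your main route. That alternative therefore works only if you apply local spatial analyticity to $\partial_t^l\varphi$ and bound its local norm spectrally.
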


 \begin{proof}
Denote $\omega = \mathbb{T} \times (a,b)$.
It suffices to prove the case $s = 0$. Clearly, if 
$\vp^0=\sum\limits_{n\in\N^*}\vp_n^0\Phi_n$ with $\vp_n^0=(\vp^0,\Phi_n)_{L^2(\Om)}$, then
	\begin{equation*}
		\begin{split}
			\vp(z,t)=\sum_{n\in\N^*}\vp_n^0 e^{-\la_nt} \Phi_n,
		\end{split}
	\end{equation*}
	 where $\lambda_n$ and $\Phi_n$ are defined in Notation \ref{01.05.N1}. We introduce a new function
	\begin{equation}\label{01.08.3}
		\begin{split}
			\vp(\theta, r, \tau,t)=\sum_{n\in\N^*}\vp_n^0 e^{-\la_nt+\sqrt{\la_n}\tau}\Phi_n(\theta,r),\ (\theta, r)\in\Om, \tau\in \R, t>0.
		\end{split}
	\end{equation}
	Then we have
	\begin{equation}\label{01.18.4}
		\vp(\theta, r, 0,t)=\vp(\theta,r,t),\mbox{ and } \pt_t^l\vp(\theta, r, \tau,t)=\sum_{n\in\N^*} \vp_n^0 (-\la_n)^l e^{-\la_nt+\sqrt{\la_n}\tau}\Phi_n(\theta, r).
	\end{equation}
	 Moreover, for each $t > 0$, the function $\partial_t^l \varphi(\theta, r, \tau,t)$ satisfies
	\begin{equation}\label{01.18.5}
		\left(-\mcA+\pt_{\tau\tau}\right)\pt_t^l \vp(\theta, r, \tau, t)=0 \mbox{ in } \Om\ts \R.
	\end{equation}
	
Let $R \in \left(0, \frac{1}{4} \min\{a, 1-b\}\right)$, and   define $B_R(\theta_0,r_0) = \{z \in \Omega \colon |\theta-\theta_0|^2 + |r-r_0|^2 < R^2\}$ and $D_R(z_0,\tau_0) = B_R(z_0) \times (\tau_0-R, \tau_0+R)$ with $z_0 = (\theta_0,r_0)$.
Since there exist finitely many $z_0 \in \mathbb{T} \times [a,b]$ such that the sets $B_R(z_0)$ cover $\mathbb{T} \times (a,b)$, we can assume that $\omega \subset B_R(z_0)$.
Note that $r^\alpha$ is analytic on $\mathbb{T} \times (0, 1)$. From the real analytic estimates of solutions to linear elliptic equations with real analytic coefficients \cite[Chapter 3]{John}, there exist constants $C \geq 1$ and $\rho \in (0,1)$, depending solely on $\alpha$, $a$, and $b$, such that any solution of equation \eqref{01.18.5} satisfies
	\begin{equation}\label{01.18.9}
		\begin{split}
			\left\|\pt_\theta^j\pt_r^k\pt_t^l\vp(\cdot, \cdot, \cdot, t)\right\|_{L^\iy(D_R(z_0,0))}\leq C\f{(j+k)!}{\rho^{j+k}}\left(\int_{D_{2R}(z_0,0)}\left|\pt_t^l \vp(\theta, r, \tau, t)\right|^2\df \theta \df r \df \tau\right)^\f{1}{2}.
		\end{split}
	\end{equation}
	Note that for each $t>0$ we have
	\begin{equation*}
		\begin{split}
			\int_{D_{2R}(z_0,0)}\left|\pt_t^l \vp(\theta, r, \tau, t)\right|^2\df \theta \df r\df \tau
			&\leq \int_{-2R}^{2R} \int_{B_{2R}(z_0)}\left|\pt_t^l\vp(\theta, r, \tau, t)\right|^2\df \theta\df r\df \tau\\
			&\leq \int_{-2R}^{2R}\int_\Om \left|\pt_t^l\vp(\theta, r, \tau, t)\right|^2\df \theta\df r\df\tau,
		\end{split}
	\end{equation*}
	and
	\begin{equation*}
		\begin{split}
			\int_\Om \left|\pt_t^l\vp(\theta, r, \tau, t)\right|^2\df \theta\df r\leq \sum_{n\in\N^*}(\vp_n^0)^2\la_n^{2l}e^{-2\la_n t+2\sqrt{\la_n}\tau},
		\end{split}
	\end{equation*}
	hence, for each $t>0$, we have
	\begin{equation}\label{01.18.6}
		\begin{split}
			&\int_{D_{2R}(z_0,0)}\left|\pt_t^l \vp(\theta, r, \tau, t)\right|^2\df \theta\df r\df \tau\leq \max_{n\in\N^*}\left\{\la_n^{2l}e^{-\la_nt}\right\}\max_{n\in\N^*}\left\{e^{-\la_nt+4R\sqrt{\la_n}}\right\}\sum_{n\in\N^*}(\vp_n^0)^2.
		\end{split}
	\end{equation}
	 For each $k\in\N^*$, we have $\max\limits_{\la>0} \la^{2k}e^{-\la t}=(\f{2}{t})^{2k}(\f{k}{e})^{2k}$. From the Stirling formula, we obtain
	\begin{equation*}
		m!\sim \left(\f{m}{e}\right)^m \sqrt{2\pi m}, \mbox{ as } m\ra +\iy,
	\end{equation*}
	then
	\begin{equation}\label{01.18.7}
		\begin{split}
			\max_{n\in\N^*}\left\{\la_n^{2l}e^{-\la_nt}\right\}\leq C\left(\f{2}{t}\right)^{2l}\left(l!\right)^2.
		\end{split}
	\end{equation}
	 Since  $\max\limits_{n\in\N^*}\{e^{-\la_nt+4R\sqrt{\la_n}}\}\leq e^\f{4R^2}{t}$, from \eqref{01.18.6} and \eqref{01.18.7}, we get
	\begin{equation}\label{01.18.8}
		\begin{split}
			\int_{D_{2R}(z_0,0)}\left|\pt_t^l \vp(\theta, r, \tau, t)\right|^2\df \theta\df r\df \tau\leq Ce^\f{4R^2}{t}\left(\f{2}{t}\right)^{2l}\left(l!\right)^2.
		\end{split}
	\end{equation}
	 This, combined with \eqref{01.18.9}, yields 
	\begin{equation}\label{01.18.10}
		\begin{split}
			\left\|\pt_\theta^j\pt_r^k\pt_t^l\vp(\cdot, \cdot, \cdot, t)\right\|_{L^\iy(D_R(z_0,0))}\leq \f{Ce^\f{2R^2}{t}(j+k)!l!}{\rho^{j+k}(\f{t}{2})^l}\|\vp^0\|_{L^2(\Om)}.
		\end{split}
	\end{equation}
In particular, \eqref{01.18.2} holds according to \eqref{01.18.4}. This completes the proof of the proposition. 
\end{proof}

 The subsequent lemmas are derived from either \cite[Lemmas 3.4 and 3.5]{Liu} or \cite{Apraiz}.
\begin{lemma}\label{01.18.L1}
Let $g: [a,a+s]\to\mathbb{R}$, where $a\in\mathbb{R}$ and $s>0$, be a real-analytic function that satisfies the condition
\begin{equation*}
\left|\frac{\mathrm{d}^k}{\mathrm{d} x^k}g(x)\right|\leq Mk! (s\rho)^{-k}, \quad \text{for all } x\in [a,a+s] \text{ and for all } k\in\mathbb{N},
\end{equation*}
for some constants $M>0$ and $\rho\in (0,1]$. Suppose $F\subseteq [a,a+s]$ is a measurable subset with positive measure. Then, there exist two constants $C\geq 1$ and $h\in (0,1)$, which depend solely on $\rho$ and $\frac{|F|}{s}$, such that
\begin{equation*}
|g|_{L^\infty(a,a+s)}\leq CM^{1-h}\left(\frac{1}{|F|}\int_F |g(x)|\mathrm{d} x\right)^h.
\end{equation*}
\end{lemma}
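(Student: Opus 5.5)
By affine rescaling I will reduce Lemma \ref{01.18.L1} to the case $[a,a+s]=[0,1]$. Set $G(y)=g(a+sy)/M$ for $y\in[0,1]$. Then $|G^{(k)}(y)|\le k!\rho^{-k}$ for every $k$, and $E=(F-a)/s$ has measure $|E|=|F|/s$. The two sides transform compatibly: $\|g\|_{L^\infty}=M\|G\|_{L^\infty}$ and $\frac{1}{|F|}\int_F|g|=\frac{M}{|E|}\int_E|G|$. So it suffices to prove
\[
\|G\|_{L^\infty(0,1)}\le C\Bigl(\frac{1}{|E|}\int_E|G|\Bigr)^h ,
\]
with $C,h$ depending only on $\rho$ and $|E|$. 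Multiplying back by $M=M^{1-h}M^h$ then gives the stated form.

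The first step is to extend $G$ holomorphically. The derivative bounds make the Taylor series at any $y_0\in[0,1]$ converge on the disc of radius $\rho$, with $|G|\le 2$ on the disc of radius $\rho/2$. Hence $G$ extends to a holomorphic function on the neighbourhood $U=\{z:\operatorname{dist}(z,[0,1])<\rho/2\}$, bounded there by $2$.

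The second step handles the trivial case and normalizes. If $N:=\|G\|_{L^\infty(0,1)}$ is tiny there is nothing to prove, since the estimate is homogeneous after the normalization above. We may therefore assume $N\le 1$ and work with $G$ directly. Let $m=\frac{1}{|E|}\int_E|G|$. By Chebyshev, the set $E'=\{y\in E:|G(y)|\le 2m\}$ satisfies $|E'|\ge |E|/2$.

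The third step, which is the main obstacle, is a Remez/Nadirashvili-type propagation of smallness from a set of positive measure to the whole interval for functions holomorphic and bounded on $U$. I would use the Cartan–Remez lemma for analytic functions, as in Nadirashvili and Vessella: there exist $C,h$ depending only on $\rho$ and $|E'|$ such that
\[
\sup_{[0,1]}|G|\le C\Bigl(\sup_U|G|\Bigr)^{1-h}\Bigl(\sup_{E'}|G|\Bigr)^h .
\]
I would first attempt this via the two-constants theorem, taking the harmonic measure of $E'$ relative to $U\setminus E'$. Its lower bound on $[0,1]$ depends only on $|E'|$ and $\rho$ (by capacity, since a set of length $\ell$ has logarithmic capacity at least $\ell/4$). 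The remaining difficulty is measurability and regularity of $E'$; I would handle it by approximating $E'$ from inside by compact sets. As a fallback I would use the zero-counting route: by Jensen's formula, $G$ has at most $K=C(\rho)\log(2/N)$ zeros in a slightly smaller neighbourhood. Factor $G=PH$ with $P$ a monic polynomial of degree $\le K$ and $|\log|H||$ controlled by $K$. Then apply the classical Remez inequality $\sup_{[0,1]}|P|\le (4/|E'|)^K\sup_{E'}|P|$. The resulting bound is $N\le (C/|E'|)^{C\log(2/N)}\,2m$, which rearranges to $N\le C m^h$.

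Substituting $\sup_U|G|\le 2$ and $\sup_{E'}|G|\le 2m$ gives $\|G\|_{L^\infty(0,1)}\le Cm^h$. Undoing the scaling proves the lemma.
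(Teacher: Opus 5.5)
Your proposal is correct and is the standard argument behind the sources the paper cites for this lemma; the paper gives no proof of its own and only refers to Liu and Apraiz et al. That argument rescales to $[0,1]$ with $M=1$, extends $G$ holomorphically to a $\rho/2$-neighbourhood where $|G|\le 2$, uses Chebyshev to trade the $L^1$ average for the bound $2m$ on a subset of measure at least $|E|/2$, and applies a Nadirashvili--Vessella propagation-of-smallness (two-constants/harmonic-measure) estimate. The one slip is your claim in step two that small $N$ is trivial because the estimate is homogeneous: it is neither homogeneous nor trivial for small $N$. Nothing depends on it, though, since $N\le 1$ is just the $k=0$ case of the normalized bound, and your two-constants inequality holds for every $N$.
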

\begin{lemma}\label{01.18.L2}
Let $\Omega$ be a bounded domain in $\mathbb{R}^N$, where $\R^N, N\in\N^*$. Let $f: \Omega\to \mathbb{R}$ be a real-analytic function that satisfies the condition
\begin{equation*}
\left|\partial_x^\beta f(x)\right|\leq M |\beta|! \rho^{-|\beta|}, \quad \text{for all } x\in \Omega \text{ and for all } \beta\in \mathbb{N}^N
\end{equation*}
for some constants $M>0$ and $\rho\in (0,1)$. Assume $\widetilde{\Omega}\subseteq \Omega$ is a measurable subset with positive measure. Then, there exist two constants $C\geq 1$ and $h\in (0,1)$, which depend solely on $|\Omega|, \rho,$ and $|\widetilde{\Omega}|$, such that
\begin{equation*}
|f|_{L^\infty(\Omega)}\leq CM^{1-h}\left(\int_{\widetilde{\Omega}}|f(x)|\mathrm{d} x\right)^h.
\end{equation*}
\end{lemma}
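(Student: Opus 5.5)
The plan is to reduce Lemma \ref{01.18.L2} to the one-dimensional Lemma \ref{01.18.L1} by restricting $f$ to line segments and using Fubini. After that, a chain-of-balls argument transfers the resulting bound to all of $\Omega$.

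\textbf{Normalization.} Replace $f$ by $f/M$, so we may assume $M=1$. The claim then reads
\[
|f|_{L^\infty(\Omega)}\le C\Big(\int_{\widetilde\Omega}|f|\,dx\Big)^h .
\]
The hypothesis $|\partial^\beta f|\le |\beta|!\rho^{-|\beta|}$ has two consequences. First, $f$ extends holomorphically to a complex neighbourhood of width comparable to $\rho$ around each point of $\Omega$, with a uniform bound there. Second, for every segment $\ell=\{x+t\nu:\ t\in[0,L]\}\subset\Omega$ with $|\nu|=1$, the function $g(t)=f(x+t\nu)$ satisfies $|g^{(k)}|\le N^{k}k!\rho^{-k}$. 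This follows from the multinomial expansion $\sum_{|\beta|=k}\frac{k!}{\beta!}\nu^\beta\partial^\beta f$, and the harmless factor $N^k$ can be absorbed by replacing $\rho$ with $\rho/N$.

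\textbf{Local step.} Pick a Lebesgue density point $x_0$ of $\widetilde\Omega$ and a small ball $B=B(x_0,\delta)\Subset\Omega$ such that $|\widetilde\Omega\cap B|\ge \tfrac12|B|$. Here $\delta$ is chosen in terms of $\rho$ and the geometry, although its dependence on $\widetilde\Omega$ is exactly what must be controlled; see the last paragraph. Write $\widetilde\Omega\cap B$ in polar coordinates around a point $y\in B$, or alternatively foliate $B$ by parallel lines in a fixed direction. By Fubini, a set of lines of positive measure meets $\widetilde\Omega$ in one-dimensional sets $F_\ell$ with $|F_\ell|\gtrsim\delta$, and $\int_\ell|f|\,dt$ over $F_\ell$ is controlled by $\int_{\widetilde\Omega}|f|$ on average. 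Applying Lemma \ref{01.18.L1} on each such line $\ell$ with $s\simeq\delta$ gives
\[
|f|_{L^\infty(\ell)}\le C\Big(|F_\ell|^{-1}\int_{F_\ell}|f|\Big)^{h_1}.
\]
Next choose a good family of lines whose union covers a set of positive measure; Chebyshev's inequality makes at least half of them good. Then apply Lemma \ref{01.18.L1} a second time along transversal lines, which now meet the controlled set in a set of proportional length. This yields $|f|_{L^\infty(B)}\le C(\int_{\widetilde\Omega}|f|)^{h_2}$. Iterating $N$ times handles dimension $N$.

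\textbf{Propagation to all of $\Omega$.} Since $\Omega$ is a bounded connected domain, cover $\overline{\Omega}$ by a finite chain of balls $B_0=B,B_1,\dots,B_m$ of radius comparable to $\rho$, with consecutive balls overlapping in sets of measure $\gtrsim\rho^N$. The number $m$ depends only on the geometry of $\Omega$ and $\rho$. Applying the local step with $\widetilde\Omega$ replaced by $B_{i}\cap B_{i+1}$ gives
\[
|f|_{L^\infty(B_{i+1})}\le C|f|_{L^\infty(B_i)}^{h_2},
\]
and composing these $m$ inequalities yields the claim with $h=h_2^{m+1}$.

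\textbf{Main obstacle.} The delicate point is the uniformity of the constants. The statement says $C$ and $h$ depend only on $|\Omega|$, $\rho$ and $|\widetilde\Omega|$, but the density-point radius $\delta$ depends on the shape of $\widetilde\Omega$, not only on its measure. To avoid this, I would not use density points at all. Instead I would apply the slicing argument on a fixed grid of cubes of side $\sim\rho$ covering $\Omega$: by pigeonhole, one cube $K$ satisfies $|\widetilde\Omega\cap K|\ge|\widetilde\Omega|/\#\text{cubes}$, a bound depending only on the allowed quantities, and the Fubini/Chebyshev selection then works with $s=\mathrm{side}(K)$. For the chain I would rely on the holomorphic extension, via a Hadamard three-circles or Nadirashvili-type estimate, rather than on geometric regularity of $\Omega$. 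I expect this uniformity bookkeeping to be the hardest step.
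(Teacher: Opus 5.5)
\textbf{There is a genuine gap: the propagation step, and the uniformity you aim for is unattainable.}

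Your local step is sound on a box. Slice a cube $K\subseteq\Omega$ by coordinate lines. Fubini and Chebyshev give a set $G'$ of transversal coordinates, of proportional measure, on whose lines Lemma~\ref{01.18.L1} bounds $|f|$. Because the controlled set $G'\times(\text{edge})$ is a product, each slice $x_1=\mathrm{const}$ is again an instance of the same problem in dimension $N-1$, so induction on $N$ closes.

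The gap is the passage from one cube to $\|f\|_{L^\infty(\Omega)}$. Balls or cubes of size $\sim\rho$ \emph{contained in} $\Omega$ never reach points within distance $\ll\rho$ of $\partial\Omega$, nor parts of $\Omega$ thinner than $\rho$. Yet the statement bounds $f$ up to the boundary. Grid cubes or balls that stick out of $\Omega$ cannot be used, since $f$ and its derivative bounds live only on $\Omega$ (and Taylor continuation across $\partial\Omega$ need not be single-valued for a general domain).

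Moreover, the number of pieces is governed by the diameter of $\Omega$, not its measure. This is not bookkeeping that a three-circles or Nadirashvili-type estimate could repair. With $C,h$ depending only on $|\Omega|$, $\rho$, $|\widetilde\Omega|$, the statement is false, even for convex $\Omega$. Take
\[
\Omega_L=(0,L)\times(0,L^{-1}),\qquad \widetilde\Omega_L=(0,L/2)\times(0,L^{-1}),\qquad f(x)=e^{x_1/\rho}.
\]
Then $|\Omega_L|=1$ and $|\widetilde\Omega_L|=\frac12$. Also $\partial^\beta f=0$ unless $\beta=(k,0)$, and $|\partial_1^kf|=\rho^{-k}f\le Mk!\rho^{-k}$ with $M=e^{L/\rho}=\|f\|_{L^\infty(\Omega_L)}$. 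On the other hand $\int_{\widetilde\Omega_L}|f|\le\frac{\rho}{L}e^{L/(2\rho)}$. The claimed inequality would then force $e^{hL/(2\rho)}(L/\rho)^h\le C$ for every $L$, which is impossible. So an extra geometric parameter must enter, for instance convexity together with $\operatorname{diam}\Omega$, or a ball with derivative bounds scaled by its radius.

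\textbf{What the paper does, and the standard argument.} The paper gives no proof of its own; it imports the lemma from the works of Liu et al.\ and Apraiz et al. The standard argument for such results (as in Apraiz et al.) needs no propagation at all. For convex $\Omega$ with $d=\operatorname{diam}\Omega$, it runs as follows.
\begin{enumerate}
\item Pick $x_0\in\Omega$ with $|f(x_0)|\ge\frac12\|f\|_{L^\infty(\Omega)}$ and use polar coordinates centred at $x_0$. Convexity keeps every segment from $x_0$ inside $\Omega$.
\item Discard $\widetilde\Omega\cap B(x_0,\varepsilon)$ with $|B(x_0,\varepsilon)|=\frac12|\widetilde\Omega|$, so the Jacobian $r^{N-1}$ is at least $\varepsilon^{N-1}$ on the remainder.
\item Fubini plus Chebyshev on $S^{N-1}$ give a direction $\nu$ whose chord from $x_0$ meets $\widetilde\Omega$ in a set $F$ with $|F|\gtrsim|\widetilde\Omega|\,d^{1-N}$ and $\int_F|f|\lesssim\varepsilon^{1-N}d^N|\widetilde\Omega|^{-1}\int_{\widetilde\Omega}|f|$.
\item A single application of Lemma~\ref{01.18.L1} on that chord bounds its supremum, hence $|f(x_0)|$. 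The constants depend on $N,\rho,d,|\widetilde\Omega|$.
\end{enumerate}
Centring the slice at the point where the bound is wanted is exactly what makes density points, grids and chains of balls unnecessary.

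For the only use of the lemma in the paper (Proposition~\ref{01.18.P2}, on $\mathbb{T}\times(a,b)$, a box in the $(\theta,r)$ coordinates), your own parallel-slicing induction applied to the whole box already suffices.
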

Next, we establish the following interpolation inequality.
\begin{proposition}\label{01.18.P2}
Let $0<a<b\leq 1$, $0\leq t_1<t_2<1$, $\eta\in (0,1)$, and $\sigma>0$. Assume $E\subseteq (t_1,t_2)$ is a measurable subset with $|E\cap (t_1,t_2)|\geq \eta(t_2-t_1)$, and for each $t\in E$, the measurable subset $D_t\subseteq \mathbb{T}\times (a,b)$ satisfies $|D_t|\geq \sigma$. Then, there exist constants $C\geq 1$ and $h\in (0,1)$, which depend solely on $\alpha, a, b, \eta,$ and $\sigma$, such that the solution of equation \eqref{12.14.1} with $f=0$ satisfies
\begin{equation}\label{01.18.11}
|\varphi(t_2)|_{L^2(\Omega)}\leq \left(\int_{t_1}^{t_2}\chi_E(t)|\varphi(t)|_{L^1(D_t)}\mathrm{d} t\right)^h \left(e^{\frac{C}{(t_2-t_1)^{8}}}|\varphi(t_1)|_{L^2(\Omega)}\right)^{1-h}.
\end{equation}
\end{proposition}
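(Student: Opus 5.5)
The plan follows the Apraiz--Escauriaza--Wang--Zhang (and Liu) scheme. We combine the observability inequality on the strip $\omega=\mathbb{T}\times(a,b)$ (Corollary \ref{01.18.C1}) with the analyticity estimate of Proposition \ref{01.18.P1}, first in space and then in time, via Lemmas \ref{01.18.L2} and \ref{01.18.L1}.

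\textbf{Step 1: pointwise control from analyticity.} Set $\tau=t_2-t_1$ and fix $t\in(t_1+\tau/2,t_2]$, or more generally $t$ in a later portion of $(t_1,t_2)$ chosen so that $E$ still has proportional measure there. Apply Proposition \ref{01.18.P1} with $s=t_1$. This shows that $(\theta,r)\mapsto\varphi(\theta,r,t)$ is analytic on $\omega$ with
\[
|\partial_\theta^j\partial_r^k\varphi(\cdot,t)|\le Ce^{C/\tau}(j+k)!\rho^{-(j+k)}\|\varphi(t_1)\|_{L^2(\Omega)}.
\]
For $t\in E$, Lemma \ref{01.18.L2} on $\omega$ with $\widetilde\Omega=D_t$ then gives
\[
\|\varphi(t)\|_{L^\infty(\omega)}\le \bigl(Ce^{C/\tau}\|\varphi(t_1)\|\bigr)^{1-h_1}\|\varphi(t)\|_{L^1(D_t)}^{h_1},
\]
where $h_1$ depends only on $\alpha,a,b,\sigma$.

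\textbf{Step 2: time analyticity.} Consider $G(t)=\|\varphi(t)\|_{L^2(\omega)}^2$, or more simply the function $t\mapsto\varphi(\theta,r,t)$ for fixed $(\theta,r)$. Proposition \ref{01.18.P1} with $j=k=0$ gives on an interval $[t_1+\tau/4,t_2]$ the bound
\[
|\partial_t^l\varphi|\le Ce^{C/\tau}l!\,(\tau/8)^{-l}\|\varphi(t_1)\|,
\]
so Lemma \ref{01.18.L1} applies with $s\simeq\tau$, $\rho$ an absolute constant and $F=E\cap[t_1+\tau/4,t_2]$, which has measure $\ge(\eta-\tfrac14)\tau$. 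If $\eta\le 1/4$, shrink the excluded initial portion to $\eta\tau/2$ instead. The cleanest way to combine this with Step 1 is to work with $g(t)=\int_\omega|\varphi(z,t)|\,dz$ directly. Since $g$ is not analytic, the plan is instead to apply Lemma \ref{01.18.L2} in the joint variables $(z,t)\in\omega\times(t_1+\tau/4,t_2)$, with the measurable set $\{(z,t):t\in E,\ z\in D_t\}$ of measure $\ge\sigma|F|$. The joint derivative bound $M=Ce^{C/\tau}\|\varphi(t_1)\|$ holds after rescaling time by $\tau$. This yields
\[
\sup_{\omega\times(t_1+\tau/2,t_2)}|\varphi|\le \bigl(Ce^{C/\tau}\|\varphi(t_1)\|\bigr)^{1-h}\Bigl(\int_{t_1}^{t_2}\chi_E\|\varphi(t)\|_{L^1(D_t)}dt\Bigr)^{h}.
\]
This skips Step 1 and handles the measurable space–time set in one shot. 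The constants depend on $|\omega\times F|$ only through $\eta,\sigma,\tau$; the $\tau$-dependence is absorbed into $e^{C/\tau}$ after rescaling.

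\textbf{Step 3: observability closes the estimate.} Apply Corollary \ref{01.18.C1} on the time interval $(t_1+\tau/2,t_2)$, which has length $\tau/2$:
\[
\|\varphi(t_2)\|^2\le C\tau^7e^{C/\tau^8}\int_{t_1+\tau/2}^{t_2}\int_\omega\varphi^2\le Ce^{C/\tau^8}\sup_{\omega\times(t_1+\tau/2,t_2)}|\varphi|^2 .
\]
Combining this with Step 2 and absorbing $e^{C/\tau}\le e^{C/\tau^8}$ (valid since $\tau<1$) gives \eqref{01.18.11} after renaming constants, raising the prefactor to the power $1-h$ and adjusting $C$. A harmless case split, possibly replacing $h$ by a smaller value, removes the leftover multiplicative constant $C$, because $\|\varphi(t_2)\|\le\|\varphi(t_1)\|$.

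\textbf{Main obstacle.} The delicate point is Step 2: checking that Lemma \ref{01.18.L2} applies uniformly after the time rescaling $t=t_1+\tau\bar t$. The rescaled derivatives gain factors $\tau^l$, which is exactly compensated by the $(\tau/2)^{-l}$ in Proposition \ref{01.18.P1}. The spatial radius $\rho$, however, stays fixed, so the rescaled domain $\omega\times(1/4,1)$ has controlled size and $h,C$ depend only on $\alpha,a,b,\eta,\sigma$. If this joint application is problematic, I would fall back on the two-step route: spatial Lemma \ref{01.18.L2} at each $t\in E$, followed by Lemma \ref{01.18.L1} in $t$ applied to $\varphi(z^*,t)$ at a point $z^*$ near the sup.
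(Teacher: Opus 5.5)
Your proof is correct, but its central step differs from the paper's. The paper never uses a space--time propagation of smallness. After cutting off $(t_1,\,t_1+\frac{\eta}{10}(t_2-t_1))$, it proceeds in five steps:
\begin{itemize}
\item it applies Corollary \ref{01.18.C1};
\item it interpolates $\|\varphi\|_{L^2}\le\|\varphi\|_{L^\infty}^{1/2}\|\varphi\|_{L^1}^{1/2}$ on the remaining cylinder, bounding the $L^\infty$ factor by $M$;
\item it applies the one-dimensional Lemma \ref{01.18.L1} in $t$ for each fixed $z\in\omega$;
\item it integrates in $z$ using H\"older;
\item only then does it apply Lemma \ref{01.18.L2} in the space variables at each $t\in F$.
\end{itemize}
This is essentially your fallback, and it ends with exponent $hh_1/2$ on the observation term.

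Your main route collapses the last four steps into one application of Lemma \ref{01.18.L2} with $N=3$. You apply it to $\varphi(z,t_1+\tau\bar t)$ on, say, $\omega\times(\eta/2,1)$, with the set $\{(z,\bar t): t_1+\tau\bar t\in F,\ z\in D_{t_1+\tau\bar t}\}$, whose measure is $\ge\sigma\eta/2$. This is legitimate because Proposition \ref{01.18.P1} gives a joint bound and $(j+k)!\,l!\le(j+k+l)!$. Hence the hypothesis of Lemma \ref{01.18.L2} holds with a radius depending only on $\rho$ and $\eta$. The time rescaling is genuinely needed: Lemma \ref{01.18.L2} has constants depending on $|\Omega|$ and $|\widetilde\Omega|$, unlike the normalized Lemma \ref{01.18.L1}.

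What your version buys is brevity: one lemma, no $L^\infty$--$L^1$ interpolation, and no bookkeeping of $hh_1$. What it costs:
\begin{itemize}
\item it needs the joint analyticity bound rather than separate ones;
\item it uses measurability of the space--time set in $\mathbb{R}^3$. This holds in the application, where the $D_t$ are slices of the measurable $D$, and Fubini then gives its measure as $\int_F|D_t|\,dt$.
\end{itemize}
By contrast, the paper's two-step route needs only the pure time bound for Lemma \ref{01.18.L1} and the pure space bound on the fixed set $\omega$ for Lemma \ref{01.18.L2}.

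Finally, no case split is needed at the end. Since $t_2-t_1<1$, three factors can all be absorbed into $\bigl(e^{C/(t_2-t_1)^8}\bigr)^{1-h}$ by enlarging $C$: the leftover multiplicative constant, the Jacobian factor $\tau^{-h}$, and $e^{C(1-h)/\tau}$.
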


\begin{proof}
	Define $\om=\mbT\ts (a,b)$, and
	\begin{equation*}
		\begin{split}
			\tau=t_1+\f{\eta}{10}(t_2-t_1),\quad F=E\cap (\tau,t_2).
		\end{split}
	\end{equation*}
Then, it follows that $|F| > \frac{\eta}{2}(t_2 - t_1)$. According to Proposition \ref{01.18.P1}, there exist constants $C \geq 1$ and $h \in (0, 1)$, which depend solely on $\alpha$, $a$, $b$, $\eta$, and $\sigma$, such that for all $t \in [\tau, t_2]$ and for all $z \in \omega$, $j, k, l \in \mathbb{N}^*$, the following inequality holds:	
	\begin{equation}\label{01.18.12}
		\begin{split}
			\left|\pt_\theta^j\pt_r^k\pt_t^l\vp(\cdot, \cdot,t)\right|\leq \f{Ce^{\f{C}{t_2-t_1}}(j+k)!l!}{\rho^{j+k}(\f{\eta(t_2-t_1)}{2})^l}\|\vp(\cdot,t_1)\|_{L^2(\Om)}.
		\end{split}
	\end{equation}
	Let $M = e^{\frac{C}{t_2 - t_1}} |\varphi(\cdot, \cdot, t_1)|_{L^2(\Omega)}$. Then, for each $z \in \omega$ and for all $l \in \mathbb{N}$, we have
	\begin{equation*}
		\left|\pt_t^l \vp(\theta, r, t)\right|\leq Ml!\left(\f{\eta(t_2-\tau)}{20}\right)^{-l}, \mbox{ for all } t\in [\tau, t_2].
	\end{equation*}
	 Hence, by Lemma \ref{01.18.L1}, there exist constants $C \geq 1$ and $h \in (0, 1)$, depending only on $\eta$, such that for all $z \in \omega$, we obtain
	\begin{equation}\label{01.18.13}
		\|\vp(\theta, r, \cdot)\|_{L^\iy(\tau,t_2)}\leq M^{1-h}\left(\f{1}{|F|}\int_F|\vp(\theta, r, t)|\df t\right)^h.
	\end{equation}
	
	Now, from Corollary \ref{01.18.C1} and given that $0 \leq t_1 < t_2 < 1$, there exists a constant $C \geq 1$, depending only on $\alpha$, $a$, and $b$, such that
	\begin{equation}\label{01.18.14}
		\|\vp(z, t_2)\|_{L^2(\Om)}\leq \|\vp(z,\tau)\|_{L^2(\Om)}\leq e^{\f{C}{(t_2-\tau)^{8}}}\|\vp\|_{L^2(\mbT\ts (a,b)\ts (\tau,t_2))}.
	\end{equation}
	 From \eqref{01.18.12}, we can derive
	\begin{equation*}
		\begin{split}
		\|\vp\|_{L^2(\mbT\ts (a,b)\ts (\tau,t_2))}
		&\leq \|\vp\|_{L^\iy(\mbT\ts (a,b)\ts (\tau,t_2))}^\f{1}{2}\|\vp\|_{L^1(\mbT\ts (a,b)\ts (\tau,t_2))}^\f{1}{2}\\
		&\leq e^\f{C}{t_2-t_1}\|\vp(\cdot,\cdot,t_1)\|_{L^2(\Om)}^\f{1}{2}\|\vp\|_{L^1(\mbT\ts (a,b)\ts (\tau,t_2)}^\f{1}{2}. 
		\end{split}
	\end{equation*}
	 Thus, from \eqref{01.18.14}, we have
	\begin{equation}\label{01.18.15}
		\begin{split}
			\|\vp(\cdot,\cdot,t_2)\|_{L^2(\Om)}\leq \|\vp\|_{L^1(\mbT\ts (a,b)\ts (\tau,t_2))}^\f{1}{2}\left(e^{\f{C}{(t_2-t_1)^{8}}}\|\vp(\cdot,\cdot,t_1)\|_{L^2(\Om)}\right)^\f{1}{2}. 
		\end{split}
	\end{equation}
	 Note that from \eqref{01.18.13}, we get
	\begin{equation*}
		\begin{split}
			\|\vp\|_{L^1(\mbT\ts (a,b)\ts (\tau,t_2))}
			&\leq (t_2-\tau)\int_{\mbT\ts (a,b)}\|\vp(z,\cdot)\|_{L^\iy(\tau,t_2)}\df z\\
			&\leq CM^{1-h}\int_{\mbT\ts (a,b)}\left(\int_F |\vp(z,t)|\df t\right)^h \df z.
		\end{split}
	\end{equation*}
	This implies that
	\begin{equation}\label{01.18.16}
		\|\vp\|_{L^1(\mbT\ts (a,b)\ts (\tau,t_2))}\leq CM^{1-h}\left(\int_{\mbT\ts (a,b)}\int_F |\vp(z,t)|\df t\df z\right)^h
	\end{equation}
	by applying the  H\"older inequality.
	
	Finally,  using \eqref{01.18.12}, we obtain 
	\begin{equation*}
		\begin{split}
			\left|\pt_\theta^j\pt_r^k\vp(\cdot,\cdot, t)\right|\leq M(j+k)! \rho^{-(j+k)} \mbox{ for all } z\in \om \mbox{ and for all } t\in F.
		\end{split}
	\end{equation*}
Then, given that $|D_t| \geq \sigma$ for $t \in F$ and by Lemma \ref{01.18.L2}, there exist constants $C_1 \geq 1$ and $h_1 \in (0, 1)$, depending only on $\alpha$, $a$, $b$, $\rho$, and $\sigma$, such that for any $t \in F$,	
	\begin{equation}\label{01.18.17}
		\|\vp(\cdot,\cdot,t)\|_{L^\iy(\mbT\ts(a,b))}\leq C_1M^{1-h_1}\left(\int_{D_t}|\vp(z,t)|\df z\right)^{h_1}.
	\end{equation}
Combining \eqref{01.18.16} and \eqref{01.18.17} and applying the H\"older inequality, we have	
	\begin{equation*}
		\begin{split}
			\|\vp\|_{L^1(\mbT\ts (a,b)\ts (\tau,t_2))}
			&\leq
			CM^{1-h}\left(\int_FC_1M^{1-h_1}\left(\int_{D_t}|\vp(z,t)|\df z\right)^{h_1}\df t\right)^h\\
			&\leq CC_1M^{1-hh_1}\left(\int_F\int_{D_t}|\vp(z,t)|\df z\df t\right)^{hh_1}.
		\end{split}
	\end{equation*}
Combining this result with the definition of $M$ and \eqref{01.18.15}, we arrive at \eqref{01.18.11}. This completes the proof of the proposition.
\end{proof}

The following lemma is derived from \cite[Lemma 3.7]{Liu}.
\begin{lemma}\label{01.19.L1}
Let $E \subset (0,T)$ be a measurable subset with positive measure. Suppose $\ell \in (0,T)$ is a Lebesgue density point of $E$. Then, for every $q \in (0,1)$, there exists a sequence ${\ell_m}{m \in \mathbb{N}^*} \subset [0,T]$ that is monotonically decreasing and converges to $\ell$, such that for any $m \in \mathbb{N}^*$,
\begin{equation}\label{01.19.1}
\ell{m+1} - \ell_{m+2} = q(\ell_m - \ell_{m+1}),
\end{equation}
and
\begin{equation}\label{01.19.2}
\left|E \cap (\ell_{m+1}, \ell_m)\right| \geq \frac{\ell_m - \ell_{m+1}}{3}.
\end{equation}
\end{lemma}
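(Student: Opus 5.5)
Lemma \ref{01.19.L1} is a purely measure-theoretic statement, and I would prove it directly from the definition of a Lebesgue density point. The plan is to build the sequence as a geometric sequence converging to $\ell$ from the right. Fix $q\in(0,1)$ and a small length $\delta>0$, to be chosen later. Set
\begin{equation*}
\ell_m=\ell+\delta q^{m-1},\qquad m\in\N^*.
\end{equation*}
Then $\ell_m-\ell_{m+1}=\delta q^{m-1}(1-q)$, so $\ell_{m+1}-\ell_{m+2}=q(\ell_m-\ell_{m+1})$ holds automatically. The sequence is strictly decreasing and tends to $\ell$. It also stays in $[0,T]$ as soon as $\delta\leq T-\ell$, which is possible since $\ell<T$.

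The only real work is the density estimate \eqref{01.19.2}. Since $\ell$ is a density point of $E$, we have
\begin{equation*}
\lim_{h\to 0^+}\frac{|E\cap(\ell,\ell+h)|}{h}=1,
\end{equation*}
using one-sided density, which follows from two-sided density because $|(\ell-h,\ell+h)\setminus E|=o(h)$. Hence, for any $\e>0$ there is $h_0>0$ with
\begin{equation*}
|(\ell,\ell+h)\setminus E|\leq \e h \quad\mbox{for all } h\in(0,h_0).
\end{equation*}
Take $\delta<h_0$. For each $m$, the interval $(\ell_{m+1},\ell_m)$ lies inside $(\ell,\ell_m)$, so
\begin{equation*}
|(\ell_{m+1},\ell_m)\setminus E|\leq |(\ell,\ell_m)\setminus E|\leq \e(\ell_m-\ell)=\e\,\delta q^{m-1}.
\end{equation*}
On the other hand, $\ell_m-\ell_{m+1}=(1-q)\delta q^{m-1}$. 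Therefore
\begin{equation*}
|E\cap(\ell_{m+1},\ell_m)|\geq (\ell_m-\ell_{m+1})-\e\delta q^{m-1}=\Big(1-\frac{\e}{1-q}\Big)(\ell_m-\ell_{m+1}).
\end{equation*}
Choosing $\e=\frac{2}{3}(1-q)$ gives the factor $\frac13$, so \eqref{01.19.2} holds for every $m\in\N^*$ simultaneously. This uniformity in $m$ comes from the fact that the error is controlled relative to $\ell_m-\ell$, which is comparable to $\ell_m-\ell_{m+1}$ with the fixed ratio $1-q$.

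The main point needing care is exactly this comparability. One should not apply density to each small interval $(\ell_{m+1},\ell_m)$ separately, since those intervals do not contain $\ell$ and density says nothing about them directly. Instead, one bounds the missing mass inside the larger interval $(\ell,\ell_m)$, and the geometric structure ensures that this larger interval is only a factor $1/(1-q)$ longer than $(\ell_{m+1},\ell_m)$. A minor point is the passage from two-sided to right-sided density, which is immediate from the definition.
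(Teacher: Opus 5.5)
Your proof is correct: the geometric choice $\ell_m=\ell+\delta q^{m-1}$, with the missing mass of $E$ bounded on the whole interval $(\ell,\ell_m)$ and $\e=\frac{2}{3}(1-q)$, gives \eqref{01.19.2} uniformly in $m$. The paper does not prove this lemma and only cites Lemma 3.7 of Liu et al.; your argument is essentially the standard density-point proof behind that cited result.
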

With the above preparations, we can now proceed to prove Theorem \ref{01.18.T1}.

 \begin{proof}[Proof of Theorem \ref{01.18.T1}]
Without loss of generality, assume $T \in (0,1]$, and $(\mathbb{T} \times {0}) \cap \overline{D} = \emptyset$, and $\omega = \mathbb{T} \times (\delta, 1) \subset \Omega$ ($\delta > 0$ is a given constant) such that $D \subset \omega \times (0,T)$. For almost every $t \in [0,T]$, we define
\begin{equation*}
 D_t=\{z\in\om\colon (z,t)\in D\}, \quad E=\left\{t\in (0,T)\colon |D_t|\geq \f{|D|}{2T}\right\}.
\end{equation*}
It is evident that $E$ is a measurable set. By Fubini's Theorem, and
\begin{equation*}
|D| = \int_0^T |D_t| , dt = \int_E |D_t| , dt + \int_{(0,T) \setminus E} |D_t| , dt \leq |E||\omega| + \frac{|D|}{2},
\end{equation*}
we deduce that $|E| \geq \frac{|D|}{2|\omega|}$, and
\begin{equation}\label{01.19.4}
\chi_E(t)\chi_{D_t}(z) \leq \chi_D(z,t), \quad \text{for almost every } (z,t) \in \Omega \times (0,T).
\end{equation}
Let $\ell \in (0,T)$ be a Lebesgue density point of $E$. Then, from Lemma \ref{01.19.L1}, for each $q \in (0,1)$, there exists a monotone sequence ${\ell_m}{m \in \mathbb{N}^*}$ such that \eqref{01.19.1} and \eqref{01.19.2} hold. Moreover, we have
\begin{equation}\label{01.19.3}
\lim\limits_{m \to +\infty} \ell_m = \ell.
\end{equation}
From Proposition \ref{01.18.P2}, there exist constants $C \geq 1$ and $h \in (0,1)$, depending only on $\alpha, a, b, D$, and $T$, such that for all $m \in \mathbb{N}^*$, we have
	\begin{equation*}
		\begin{split}
			\|\vp(\cdot, \ell_m)\|_{L^2(\Om)}\leq \left(e^{\f{C}{(\ell_m-\ell_{m+1})^8}}\int_{\ell_{m+1}}^{\ell_{m}}\chi_E(t)\|\vp(\cdot,z)\|_{L^1(D_t)}\df t\right)^h \|\vp(\cdot,\ell_{m+1})\|_{L^2(\Om)}^{1-h}.
		\end{split}
	\end{equation*}
	 This, combined with Young's inequality 
	\begin{equation*}
		ab\leq \e a^p+\e^{-\f{r}{p}}b^r, \mbox{ for all } a>0, b>0, \e>0 \mbox{ and } \f{1}{p}+\f{1}{r}=1, p>1,r>1,
	\end{equation*}
	 yields
	\begin{equation*}
		\|\vp(\cdot,\ell_m)\|_{L^2(\Om)}\leq \e \|\vp(\cdot, \ell_{m+1})\|_{L^2(\Om)}+\e^{-\f{1-h}{h}}e^\f{C}{(t_2-t_1)^8}\int_{\ell_{m+1}}^{\ell_m}\chi_E(t)\|\vp(\cdot,t)\|_{L^1(D_t)}\df t
	\end{equation*}
	 for all $m \in \mathbb{N}$. Then, for all $m \in \mathbb{N}^*$, we have
	\begin{equation*}
		\begin{split} &\e^\f{1-h}{h}e^{-\f{C}{(\ell_m-\ell_{m+1})^8}}\|\vp(\cdot,\ell_m)\|_{L^2(\Om)}-\e^\f{1}{h}e^{-\f{C}{(\ell_m-\ell_{m+1})^8}}\|\vp(\cdot,\ell_{m+1})\|_{L^2(\Om)}\\
			&\leq \int_{\ell_{m+1}}^{\ell_m}\chi_E(t)\|\vp(\cdot,t)\|_{L^1(D_t)}\df t.
		\end{split}
	\end{equation*}
	Letting $\e=e^{-h\f{1}{(\ell_m-\ell_{m+1})^8}}$  in the latter inequality, we obtain
	\begin{equation*}
		\begin{split}
			&e^{-(C+1-h)\f{1}{(\ell_m-\ell_{m+1})^8}}\|\vp(\cdot,\ell_m)\|_{L^2(\Om)}-e^{-(C+1)\f{1}{(\ell_m-\ell_{m+1})^8}}\|\vp(\cdot, \ell_{m+1})\|_{L^2(\Om)}\\\
			&\leq \int_{\ell_{m+1}}^{\ell_m}\chi_E(t)\|\vp(\cdot,t)\|_{L^1(D_t)}\df t
		\end{split}
	\end{equation*}
for all $m \in \mathbb{N}^*$. Choosing $q = \left(\frac{C+1-h}{C+1}\right)^{\frac{1}{8}}$, from \eqref{01.19.1}, we get	
	\begin{equation*}
		\begin{split} &e^{-(C+1-h)\f{1}{(\ell_m-\ell_{m+1})^8}}\|\vp(\cdot,\ell_m)\|_{L^2(\Om)}-e^{-(C+1-h)\f{1}{(\ell_{m+1}-\ell_{m+2})^8}}\|\vp(\cdot,\ell_{m+1})\|_{L^2(\Om)}\\
			&\leq  \int_{\ell_{m+1}}^{\ell_m}\chi_E(t)\|\vp(\cdot,t)\|_{L^1(D_t)}\df t
		\end{split}
	\end{equation*}
	  for all $m \in \mathbb{N}^*$. Summing over $m \in \mathbb{N}^*$, from \eqref{01.19.3} and $\sup{t \in [0,T]} |\varphi(\cdot, t)|_{L^2(\Omega)} < +\infty$, we obtain
	\begin{equation*}
		\begin{split}
			\|\vp(\cdot, \ell_1)\|_{L^2(\Om)}\leq e^{(C+1-h)\f{1}{(\ell_1-\ell_2)^8}}\int_\ell^{\ell_1}\chi_E(t)\|\vp(\cdot,t)\|_{L^1(D_t)}\df t.
		\end{split}
	\end{equation*}
	 Combining this with \eqref{01.19.4} and $|\varphi(\cdot, T)|_{L^2(\Omega)} \leq |\varphi(\cdot, \ell_1)|_{L^2(\Omega)}$, we arrive at \eqref{01.18.18}. This completes the proof of the theorem.
\end{proof}

\subsection{Controllability on measurable set}

 Below, we demonstrate that system \eqref{12.14.1} exhibits null controllability with an $L^\infty$ control applied on a measurable subset $D \subseteq Q$. Despite the standard nature of this result (HUM), we include the proof for thoroughness.
 
\begin{proof}[Proof of Theorem \ref{01.19.T1}]
Define $Y = \{\chi_D y\}$, where $y$ represents the solution to the following equation:
	\begin{equation}\label{01.19.7}
		\begin{cases}
			\pt_ty-\mcA y=0, &\mbox{in }Q, \\
			y=0, &\mbox{on }\pt Q, \\
			y(T)=y^T, &\mbox{in }\Om
		\end{cases}
	\end{equation}
	with $y^T \in L^2(\Omega)$. Clearly, $Y$ is a subspace of $L^1(D)$. We define the functional 
	\begin{equation*}
		F: Y\ra \R, \quad F(\chi_Dy)=-\int_\Om \vp^0y(0)\df z.
	\end{equation*}
	 From Theorem \ref{01.18.T1}, we derive 
	\begin{equation*}
		\begin{split}
			|F(\chi_Dy)|\leq \|\vp^0\|_{L^2(\Om)}\|y(0)\|_{L^2(\Om)}\leq C\|\vp^0\|_{L^2(\Om)}\|y\|_{L^1(D)},
		\end{split}
	\end{equation*}
indicating that $F$ is a bounded linear functional on $Y$. By the Hahn-Banach theorem, there exists a linear extension $\widetilde{F}: L^1(D) \to \mathbb{R}$ such that
\begin{equation*}
\widetilde{F} = F \text{ on } Y, \quad \text{and} \quad |\widetilde{F}(g)| \leq C |\varphi^0|_{L^2(\Omega)} |g|_{L^1(D)} \text{ for all } g \in L^1(D).
\end{equation*}
This implies that  $\wt F\in L^\iy(D)$, the dual space of $L^1(D)$. Consequently, there exists $f \in L^\infty(D)$ such that
\begin{equation*}
|f|_{L^\infty(D)} \leq C |\varphi^0|_{L^2(\Omega)}, \quad \text{and} \quad \widetilde{F}(g) = -\iint_D fg  dz dt \text{ for all } g \in L^1(D).
\end{equation*}
Extend $f$ to be zero on $Q \setminus D$, and denote it by the same symbol.

Finally, since $\varphi$ is the weak solution of \eqref{12.14.1} and $y$ is the weak solution of \eqref{01.19.7} (i.e., $y \in L^2(0,T; H_0^1(\Omega; w))$), we have
	\begin{equation*}
		\begin{split}
			\lg\pt_t\vp, y(t)\rg_{H^{-1}(\Om;w), H_0^1(\Om;w)}+\int_\Om A\nabla\vp\cdot\nabla y(t)\df z=\int_\Om \chi_{D_t}fy(t)\df z \mbox{ for a.e.}\ t\in [0,T].
		\end{split}
	\end{equation*}
	Integrating over $[0,T]$ and applying integration by parts, noting that $\varphi \in L^2(0,T; H_0^1(\Omega; w))$, we obtain
	\begin{equation*}
		\begin{split}
			\int_\Om \vp(T)y(T)\df z-\int_\Om \vp(0)y(0)\df z=\iint_Q \chi_Dfy\df z\df t.
		\end{split}
	\end{equation*}
	 Since $\chi_D y \in L^1(D)$, it follows that $\int_\Omega \varphi(T) y^T  dz = 0$ for all $y^T \in L^2(\Omega)$. This implies that $\varphi(T) = 0$ for a.e. $z \in \Omega$. Thus, we complete the proof of this theorem.
\end{proof}
 
Once Theorem \ref{01.19.T1} is established, we can proceed to discuss the bang-bang property for time and norm optimal control problems. However, as this is a standard procedure, we omit the detailed discussion in this paper.
 
\section*{Declarations}
The authors have not disclosed any competing interests
and data availability

\end{document}